\let\oldmarginpar\marginpar
\renewcommand\marginpar[1]{\-\oldmarginpar[\raggedleft\footnotesize #1]%
{\raggedright\footnotesize #1}}
\theoremstyle{plain}
\newtheorem{thm}{Theorem}[section]
\newtheorem{cor}[thm]{Corollary}
\newtheorem{prop}[thm]{Proposition}
\newtheorem{lemma}[thm]{Lemma}
\newtheorem{ques}{Question}
\newcommand{\suchthat}{\;\ifnum\currentgrouptype=16 \middle\fi|\;}
\theoremstyle{definition}
\newtheorem{defn}[thm]{Definition}
\newtheorem{Ex}[thm]{Example}
\DeclareMathOperator{\lcm}{lcm}
\DeclareMathOperator{\D}{D}
\DeclareMathOperator{\Farb}{F}
\DeclareMathOperator{\Sub}{Sub}\DeclareMathOperator{\Ord}{Ord}
\DeclareMathOperator{\rank}{rank}
\newcommand{\bdef}{\overset{\text{def}}{=}}
\newcommand{\ga}{\gamma}
\newcommand{\la}{\lambda}
\newcommand{\innp}[1]{\left< #1 \right>}
\newcommand{\set}[1]{\left\{#1\right\}}
\newcommand{\pr}[1]{\left( #1 \right) }
\newcommand{\N}{\ensuremath{\mathbb{N}}}
\newcommand{\Z}{\ensuremath{\mathbb{Z}}}
\newcommand{\factor}[2]{\left.\raisebox{.2em}{$#1$} \! \! \middle/ \! \! \raisebox{-.2em}{$#2$}\right.}
\newcommand{\map}[3]{#1 : #2 \rightarrow #3}
\newcommand{\nsub}{\trianglelefteq}
\newtheorem*{rep@theorem}{\rep@title}
\newcommand{\newreptheorem}[2]{%
	\newenvironment{rep#1}[1]{%
		\def\rep@title{#2 \ref{##1}}%
		\begin{rep@theorem}}%
		{\end{rep@theorem}}}
\begin{document}
\title{\textbf{Effective Subgroup Separability of \\ Finitely Generated Nilpotent Groups}}
\author{Jonas Der\'e and Mark Pengitore\thanks{The first author was supported by a postdoctoral fellowship of the Research Foundation - Flanders (FWO)}}
\maketitle

\begin{abstract}
	This paper studies effective separability for subgroups of finitely generated nilpotent groups and more broadly effective subgroup separability of finitely generated nilpotent groups. We provide upper and lower bounds that are polynomial with respect to the logarithm of the word length for infinite index subgroups of nilpotent groups. In the case of normal subgroups, we provide an exact computation generalizing work of the second author. We introduce a function that quantifies subgroup separability, and we provide polynomial upper and lower bounds. We finish by demonstrating that our results extend to virtually nilpotent groups and stating some open questions.
\end{abstract}

\section{Introduction}
Let $G$ be a finitely generated group with a subgroup $H$. We say that $H$ is a \emph{separable subgroup} if for each $g \in G \setminus H$ there exists a group morphism to a finite group $\map{\pi}{G}{Q}$ such that $\pi(g) \notin \pi(H)$. If the trivial subgroup is separable, we say $G$ is \emph{residually finite}. The group $G$ is called \emph{subgroup separable}, also known in the literature as locally extended residually finite (LERF), if every finitely generated subgroup of $G$ is separable. Subgroup separability is thus a natural generalization of residual finiteness.

The study of subgroup separability in the literature has been to understand which groups satisfy these properties. For instance, closed surface groups, free groups, fundamental groups of geometric $3$-manifolds, finitely generated nilpotent groups, and polycyclic groups have all been shown to be subgroup separable and subsequently, residually finite in \cite{Agol_virtual_haken,Hall_coset,Mal01,scott_surface_subgroups,Segal_book_polycyclic,Wise}. Recently, there is a lot of interesting in making effective various separability properties such as residual finiteness and subgroup separability. For a finitely generated group $G$ with a finite generating subset $S$ and a finitely generated subgroup $H \leq G$, we introduce a function $\Farb_{G,H,S}(n)$ on the natural numbers that quantifies the separability of $H$ in $G$. In particular, the value $\Farb_{G,H,S}(n)$ on a natural number $n$ is such that every element in the complement of $H$ of word length at most $n$ can be distinguished in a finite quotient of order at most $ \Farb_{G,H,S}(n)$. One can see that $\Farb_{G,H,S}(n)$ is a generalization of the function introduced for residually finiteness in \cite{Bou_Rabee10} to arbitrary finitely generated subgroups. By generalizing word length of group elements to finitely generated 
subgroups, we are also able to quantify subgroup separability. To be specific, the function $\Sub_{G,S}(n)$ on the value $n$ is the minimal value such that every finitely generated subgroup $H$ can be separated from an element $g$ in the complement of $H$ in a finite quotient of order at most $\Sub_{G,S}(n)$ as one varies over subgroups $H$ and elements $g$ satisfying $\|H\|_S, \|g\|_S \leq n$. 

Previous work on the function $\Farb_{G,H,S}(n)$ has fallen into two different contexts. When $H = \set{1}$, many papers have been written that explore the asymptotic behavior of $\Farb_{G, \set{1},S}(n)$. See \cite{LLM} and the references therein for a more complete account of the literature. When $H$ is a nontrivial, proper, finitely generated subgroup, $\Farb_{G,H,S}(n)$ has been studied when $G$ is a closed surface group, a free group, or a virtually compact special hyperbolic group. The papers \cite{zariski_closure_subgroup_separability,Patel} imply that if $G$ is a free group or a surface group and $H$ is any finitely subgroup, then there exists a $d \in \N$ such that $\Farb_{G,H,S}(n) \preceq n^d$. Likewise, \cite{Patel3} implies that if $G$ is a virtually compact special hyperbolic group and $H \leq G$ is a $K$-quasiconvex subgroup, then $\Farb_{G,H,S}(n)$ is bounded by a function which is polynomial in $n$ and exponential in $K$. In the above cases, no lower asymptotic bound was provided; moreover, there has been no prior work on the function $\Sub_{G,S}(n)$. However, providing a bound for  $\Sub_{G,S}(n)$ is similar to the result of \cite{Patel3} except the complexity of the subgroups is given by the subgroup norm instead of the quasiconvexity constant. It is also relevant for studying twisted conjugacy separability in finitely generated groups, for example see \cite[Question 1]{dere_pengitore_17}.

This article provides the first asymptotic bounds for $\Sub_{G,S}(n)$ which, in turn, provides an universal asymptotic upper bound for $\Farb_{G,H,S}(n)$ independent of the subgroup $H$. We are also the first to provide precise asymptotic bounds for $\Farb_{G,H,S}(n)$ when $H$ is a nontrivial normal subgroup and provides asymptotic lower bounds for general subgroups. These results are all in the context of nilpotent groups which generalizes the work in \cite{corrigendum_pengitore}.

To state our results, we require some notation. For two non-decreasing functions $\map{f,g}{\N}{\N}$, we write $f(n) \preceq g(n)$ if there exists a $C \in \N$ such that $f(n) \leq C \: g(C n)$ for all $n$. We write $f \approx g$ when both $f \preceq g$ and $g \preceq f$. For the next theorem, we observe that if $N$ is a finitely generated group and $H \neq N$ is a finite index subgroup, then we may pass to the normal core of $H$, denoted $H_0$, to conclude that $\Farb_{N,H,S}(n) \leq \left \vert \faktor{N}{H_0} \right\vert $. Thus, we assume that $H$ is an infinite index subgroup of $N$ to formulate the result.
\begin{thm}\label{effective_separation_subgroup_thm}
	Let $N$ be a torsion-free, finitely generated nilpotent group with a finite generating subset $S$, and suppose that $H \leq N$ is a subgroup of infinite index. Then for $h(N)$ the Hirsch length of $N$, it holds that $$\log(n) \preceq \Farb_{N,H,S}(n) \preceq \left(\log(n)\right)^{h(N)}.$$ Moreover, if $H \nsub N$, there exists a smaller constant $\psi(N,H) \in \N$ such that $$\log(n) \preceq \Farb_{N,H,S}(n) \preceq \pr{\log(n)}^{\psi(N,H)}.$$
\end{thm}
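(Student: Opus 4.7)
The plan is to prove the upper and lower bounds separately, with the refined bound for normal subgroups following from a cleaner version of the upper bound argument.

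For the lower bound $\log(n) \preceq \Farb_{N,H,S}(n)$, I would produce a sequence of witnesses whose word lengths grow in such a way that any separating finite quotient must be large. Since $[N:H] = \iny$, the isolator $\sqrt{H}^N = \set{x \in N : x^k \in H \text{ for some } k > 0}$ is a proper subgroup and $N/\sqrt{H}^N$ is an infinite torsion-free finitely generated nilpotent group. Choose $y \in N$ whose image in $N/\sqrt{H}^N$ has infinite order, and set $g_j = y^{k_j}$ with $k_j = \lcm(1,\dots,j)$. Standard polynomial-growth bounds give $\norm{g_j}_S \leq C_1\, k_j^{1/d}$ where $d \geq 1$ is the depth of $y$ in the lower central series. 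If $\map{\pi}{N}{Q}$ is any finite quotient with $\pi(g_j)\notin \pi(H)$, then the order of $\pi(y)\pi(H)$ in $Q/\pi(H)$ does not divide $k_j$, and since every integer $\leq j$ does divide $k_j$, this order exceeds $j$. Hence $\abs{Q} \geq \abs{Q/\pi(H)} > j$, and combined with the prime-number-theorem estimate $\log k_j \approx j$ this gives $\abs{Q} \succeq \log \norm{g_j}_S$.

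For the general upper bound $\Farb_{N,H,S}(n) \preceq (\log n)^{h(N)}$, I would work with a family of congruence subgroups built from a Mal'cev basis. Fix a Mal'cev basis $\set{x_1,\dots,x_{h(N)}}$ of $N$ compatible with the lower central series, and for each positive integer $m$ form a characteristic finite-index subgroup $N_m \nsub N$ by taking weighted $m$-powers of the filtration pieces, so that $[N:N_m] \leq C_2\, m^{h(N)}$ and $\ba_m N_m = \set{1}$. For $g \notin H$ with $\norm{g}_S \leq n$, the Mal'cev coordinates of $g$ are bounded by $n^{O(1)}$, and the condition $g \in HN_m$ translates into a system of congruences among the coordinates of $g$ and those of a fixed generating set of $H$. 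The key number-theoretic input is that the smallest positive integer not dividing a nonzero integer $a$ is $O(\log \abs{a})$; applying this to the coordinate of $g$ witnessing $g \notin H$ (after refining the basis to be partially adapted to $H$) produces some $m \leq C_3 \log n$ with $g \notin HN_m$, yielding a separating finite quotient of size at most $C_2(C_3 \log n)^{h(N)}$.

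For the refined bound in the normal case, my plan is to pass to the quotient $\bar N = N/H$, which is a finitely generated nilpotent group of Hirsch length strictly less than $h(N)$ (since $H$ is torsion-free and nontrivial, so $h(H) \geq 1$). Separating $g$ from $H$ in $N$ is equivalent to separating $\bar g \neq 1$ from the identity in $\bar N$, and the residual-finiteness bound for finitely generated nilpotent groups from \cite{corrigendum_pengitore} produces a finite quotient of $\bar N$ of size $\preceq (\log n)^{\psi(\bar N)}$. Setting $\psi(N,H) = \psi(\bar N)$ gives the refined bound, and $\psi(N,H) < h(N)$ follows from $\psi(\bar N) \leq h(\bar N) < h(N)$.

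The main obstacle lies in the non-normal case of the upper bound, since $H$ need not be a coordinate subgroup in any Mal'cev basis of $N$, making the image of $HN_m$ in coordinates opaque. My plan is to refine a Mal'cev basis along the descending chain $H \cdot \gamma_i(N)$ (each term a subgroup because $\gamma_i(N)$ is normal in $N$) so that $HN_m$ acquires a block-triangular form modulo the weighted moduli $m^{w_i}$, thereby isolating a single coordinate of $g$ which is nonzero modulo $m$ for almost all $m$ and to which the divisibility lemma can be applied. Balancing the weights $m^{w_i}$ against the polynomial control on coordinates so that the final count hits exactly the exponent $h(N)$, rather than an exponent inflated by the nilpotency class, is the technical core of the argument.
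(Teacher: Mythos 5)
Your lower bound and your treatment of the normal case are essentially the paper's own arguments. The lower bound takes powers $y^{k_j}$ with $k_j$ highly divisible and invokes the prime number theorem; the paper works in $\faktor{N}{\sqrt[N]{[N,N]}}$, which sidesteps your implicit use of $\faktor{N}{\sqrt[N]{H}}$ as a group (for non-normal $H$ the isolator need not be normal), but since all you actually need is an element $y \notin \sqrt[N]{H}$, whose nonzero powers then avoid $H$, this is a cosmetic repair. The normal case likewise matches the paper (Lemma \ref{subsep_rf_quotient} plus the residual finiteness bound of \cite{corrigendum_pengitore}); note only that $\faktor{N}{H}$ may have torsion and $H$ may be trivial, which is why the paper sets $\psi(N,H) = \Phi\left(\faktor{N}{\sqrt[N]{H}}\right)$ rather than an invariant of $\faktor{N}{H}$ itself, and why ``smaller'' should not be read as a strict inequality with $h(N)$ in general.

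The genuine gap is in the general upper bound, and you have located it yourself: for non-normal $H$ the set $H N_m$ has no transparent description in Mal'cev coordinates, and your plan to triangularize along the chain $H\gamma_i(N)$ and ``isolate a single coordinate of $g$ which is nonzero modulo $m$'' is announced but not carried out -- you defer exactly the ``technical core.'' That core is the content of the paper's Section 3, and two quantitative statements are missing from your sketch, neither of which is routine: (i) a reduction of an arbitrary $g \notin H$ with $g \in H N_{h(N)}$ to a \emph{central} witness $z \in N_{h(N)} \setminus H$ with $g z \in H$ and $\|z\|_S$ polynomially bounded in $\|g\|_S$ (Proposition \ref{length_of_g_mod_H}, which in turn requires controlling $\|H \cap N_i\|_S$ via the effective Bezout argument of Proposition \ref{norm_intersect_central}); and (ii) the fact that a central $z \notin H$ can be separated from an \emph{arbitrary} subgroup $H$ in a quotient $\faktor{N}{N^{p^k}}$ with $p^k \preceq (\|H\|_S)^m \log \|z\|_S$ (Proposition \ref{central_element_sep_subgroup}), which is where the interaction between $H$ and the power subgroups -- precisely the opacity you flag -- is resolved, using the isolator of $H \cap Z(N)$ and the root-extraction Lemma \ref{power_lemma}. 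Without (i) and (ii) your congruence-subgroup scheme establishes the upper bound only when $H$ is normal, where it collapses to the residual finiteness argument, so the first displayed inequality of the theorem is not proved.
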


The constant $\psi(N,H)$ is given in Definition \ref{constants_defn} and can be computed given groups $N$ and $H$. The proof of the first statement follows from induction on the Hirsch length and making effective separating a nontrivial central element from an arbitrary subgroup, and the proof of the second statement is elementary and follows from a close inspection of how the asymptotic behavior of $\Farb_{N,H,S}(N)$ relates to the asymptotic behavior of $\Farb_{\faktor{N}{H},\set{1},\bar{S}}(n)$ and from \cite{corrigendum_pengitore}.

In the context of abelian groups, the explicit form of the constant $\psi(N,H)$ gives the following consequence.
\begin{cor}
	\label{abeliancor}
	Let $A$ be a torsion free, finitely generated abelian group with a finite generating subset $S$. If $B \leq A$ is a subgroup of infinite index, then $\Farb_{A,B,S}(n) \approx \log(n)$.
\end{cor}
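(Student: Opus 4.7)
The plan is to derive both bounds from Theorem~\ref{effective_separation_subgroup_thm} by checking that the constant $\psi(A,B)$ of Definition~\ref{constants_defn} collapses to $1$ whenever $A$ is torsion-free finitely generated abelian and $B \leq A$ has infinite index. Since $A$ is abelian, every subgroup is normal, so the second half of Theorem~\ref{effective_separation_subgroup_thm} immediately gives the lower bound $\log(n) \preceq \Farb_{A,B,S}(n)$ and the upper bound $\Farb_{A,B,S}(n) \preceq (\log n)^{\psi(A,B)}$, leaving only the computation of $\psi(A,B)$.

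To carry out this computation I would pass to the quotient $M := A/B$, which is a finitely generated abelian group of positive Hirsch length since $[A:B] = \infty$. The structure theorem writes $M \cong \Z^k \oplus T$ with $k \geq 1$ and $T$ finite. The remark after Theorem~\ref{effective_separation_subgroup_thm} indicates that $\Farb_{A,B,S}(n)$ and $\Farb_{M,\{1\},\bar S}(n)$ agree up to a constant distortion of the input, so it suffices to verify $\Farb_{M,\{1\},\bar S}(n) \preceq \log n$. Given a nontrivial $\bar g \in M$ of word length at most $n$, I would split into two cases. If the $T$-component of $\bar g$ is nonzero, the surjection $M \twoheadrightarrow T$ is a separating quotient of constant size $|T|$. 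Otherwise $\bar g$ lies in the $\Z^k$ summand with some nonzero coordinate $v_i$ satisfying $|v_i| \leq C n$; then the composition $M \twoheadrightarrow \Z \twoheadrightarrow \Z / p \Z$, where $p$ is the least prime not dividing $v_i$, separates $\bar g$ from $0$ in a quotient of order $p$. The elementary estimate that $v_i$ has at most $O(\log |v_i|)$ distinct prime divisors, combined with Bertrand's postulate, forces $p = O(\log n)$.

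The main obstacle is really just bookkeeping: formalizing the comparison between $\Farb_{A,B,S}$ and $\Farb_{A/B,\{1\},\bar S}$ so that the two only differ by a constant factor in the input. In the abelian setting this is immediate from the fact that $A \twoheadrightarrow A/B$ is Lipschitz with respect to $S$ and its image $\bar S$, together with the observation that separating $g$ from $B$ in $A$ via a finite quotient factors through separating $\bar g$ from the identity in a finite quotient of $A/B$. Combining these two steps yields $\Farb_{A,B,S}(n) \preceq \log n$, which together with the lower bound from Theorem~\ref{effective_separation_subgroup_thm} gives $\Farb_{A,B,S}(n) \approx \log n$.
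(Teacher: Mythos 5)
Your proposal reaches the right conclusion and its overall shape is sound, but the route you take for the upper bound is genuinely different from the one the paper has set up, and there is one small but real slip in the number theory. The intended argument is essentially a two-line application of machinery already in place: since $A$ is abelian, $B$ is normal, and $\psi(A,B) = \Phi\bigl(A/\sqrt[A]{B}\bigr)$ by Definition~\ref{constants_defn}; the quotient $A/\sqrt[A]{B}$ is again a torsion-free, finitely generated abelian group (of positive rank, since $[\sqrt[A]{B}:B]<\infty$ and $[A:B]=\infty$), and the remark immediately after Definition~\ref{constants_defn} already records that $\Phi = 1$ for such groups, because every admissible quotient is isomorphic to $\Z$. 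The second half of Theorem~\ref{effective_separation_subgroup_thm} then gives $\log(n) \preceq \Farb_{A,B,S}(n) \preceq (\log n)^{1}$ with no further work. You announce that you will ``compute $\psi(A,B)$'' but never actually evaluate it from its definition; instead you reprove the upper bound from scratch by passing to $M = A/B \cong \Z^k \oplus T$ via Lemma~\ref{subsep_rf_quotient} and separating in $T$ or in $\Z/p\Z$. That argument is valid and has the virtue of being self-contained --- it bypasses the admissible-quotient formalism and the external citation used in the proof of the second part of Theorem~\ref{effective_separation_subgroup_thm} --- at the cost of redoing what the $\psi$-machinery was built to package.

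The quibble: Bertrand's postulate does not yield $p = O(\log n)$ from the fact that $v_i$ has $O(\log|v_i|)$ distinct prime divisors. Iterating $p_{m+1} \leq 2p_m$ only bounds the $(m+1)$-st prime by $2^{m+1}$, which here is polynomial in $n$, and even the sharper estimate $p_m \asymp m\log m$ only gives $p = O(\log n \cdot \log\log n)$, which is not $\preceq \log n$ under the paper's equivalence. The standard repair is the primorial bound: if every prime $p \leq x$ divided $v_i$, then $|v_i| \geq \prod_{p\leq x} p \geq e^{cx}$, so some prime $p \leq C\log|v_i|$ fails to divide $v_i$; alternatively, simply cite $\Farb_{\Z,\set{1},S}(n) \approx \log(n)$ from \cite{Bou_Rabee10}. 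With that repair your argument is complete.
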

This shows that for a given nilpotent group $N$, the lower bound $\log(n)$ is the best we can do since $H = [N,N]$ will lead to $\Farb_{N,H,S}(N) \approx \log(n)$. 

For general nilpotent groups $N$, we establish asymptotic upper and lower bounds for $\Sub_{N,S}(n)$.

\begin{thm}\label{effective_subgroup_separability_thm}
	Let $N$ be a torsion-free, finitely generated nilpotent group with a finite generating subset $S$. There exists a $k \in \N$ such that $$n \preceq \Sub_{N,S}(n) \preceq n^{k}.$$
\end{thm}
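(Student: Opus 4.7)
The plan is to establish the two asymptotic bounds separately.

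\textbf{Lower bound.} Since $N$ is a nontrivial torsion-free finitely generated nilpotent group, its abelianization has positive rank, so there is a surjection $\varphi : N \to \Z$. By Bertrand's postulate, for each $n \geq 2$ pick a prime $p$ with $n/2 \leq p \leq n$, and set $H_p := \varphi^{-1}(p\Z)$, a subgroup of $N$ of index $p$. Reidemeister-Schreier gives a generating set of $H_p$ whose elements have word length at most $Cp$ for a constant $C = C(N,S)$, so $\|H_p\|_S \leq Cn$. Taking $g \in N$ to be any fixed preimage of $1 \in \Z$ under $\varphi$, one has $\|g\|_S = O(1)$, $g \notin H_p$, and any finite quotient separating $g$ from $H_p$ must factor nontrivially through $N/H_p \cong \Z/p\Z$, hence has order at least $p \geq n/2$. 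This gives $\Sub_{N,S}(n) \succeq n$.

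\textbf{Upper bound.} Fix a Mal'cev basis $(x_1,\ldots,x_h)$ of $N$ adapted to a central series, $h = h(N)$. Standard facts give: (i) if $\|g\|_S \leq n$ then the Mal'cev coordinates of $g$ are polynomially bounded in $n$; and (ii) via an embedding $N \hookrightarrow \mathrm{U}_k(\Z)$, or directly from Mal'cev coordinates, one has a cofinal family $\{K_m\}_{m \in \N}$ of finite-index normal subgroups with $[N : K_m] \leq Cm^h$. Given $H \leq N$ with $\|H\|_S \leq n$ and $g \notin H$ with $\|g\|_S \leq n$, I would split into cases. If $H$ has finite index, Hall's theorem gives a canonical basis of $H$ whose diagonal entries multiply to $[N:H]$ and are polynomial in $n$, so $[N:H] \leq Q(n)$ for a polynomial $Q$; then $N^{[N:H]} \leq H$ by Lagrange and $N^{[N:H]}$ is characteristic of index $\leq [N:H]^h$ in $N$, so the normal core $H_0$ satisfies $[N:H_0] \leq Q(n)^h$, giving a polynomial-size quotient separating $g$ from $H$. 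If $H$ has infinite index, the Hall canonical basis reduction of $g$ produces a residue whose non-vanishing witnesses $g \notin H$ and whose Mal'cev coordinates are polynomial in $n$; choosing $m$ polynomial in $n$ larger than all relevant coefficients ensures $g \notin HK_m$, so $g$ is separated from $H$ in $N/K_m$ of polynomial size.

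\textbf{Main obstacle.} The key technical difficulty is effectivizing the Hall canonical basis procedure: starting from generators of $H$ of word norm $\leq n$, produce a canonical basis with polynomial-size coefficients, and bound the residue of $g$ under reduction polynomially. Since the basic Hall operations (commutator collection, integer division with remainder) preserve polynomial bounds in torsion-free finitely generated nilpotent groups, the argument should go through, but requires careful bookkeeping. A conceptually cleaner alternative would be to revisit the proof of Theorem~\ref{effective_separation_subgroup_thm} and extract polynomial dependence of its implicit constants on $\|H\|_S$; this would directly yield $\Sub_{N,S}(n) \preceq \mathrm{poly}(n) \cdot (\log n)^{h(N)} \preceq n^k$ for a suitable $k$.
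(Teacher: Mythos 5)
Your lower bound is correct and complete, and it takes a somewhat different (equally valid) route from the paper: you use the finite-index subgroups $H_p = \varphi^{-1}(p\Z)$ of norm $O(p)$ together with a fixed element $g$, exploiting that any finite quotient separating $g$ from a normal subgroup of prime index $p$ must surject onto $\Z/p\Z$ and hence have order at least $p$. The paper instead fixes a $g$ that is primitive modulo $\sqrt[N]{[N,N]}$ and takes the infinite-index cyclic subgroups $H_i = \langle g^{p_i}\rangle$, showing by an order argument that any quotient of order less than $p_i$ sends $g$ into the image of $H_i$. Both arguments give $n \preceq \Sub_{N,S}(n)$.

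The upper bound, however, has a genuine gap: essentially the whole difficulty of the theorem is concentrated in the step you defer as the ``main obstacle,'' namely producing, from generators of $H$ of word norm at most $n$, data with polynomially bounded coefficients and then exhibiting a polynomial-index normal subgroup $K$ with $g \notin HK$. Asserting that ``choosing $m$ polynomial in $n$ larger than all relevant coefficients ensures $g \notin HK_m$'' is exactly what needs proof; which congruence quotient works depends on the arithmetic of how $H$ meets a central series (divisibility of the relevant exponents), not merely on the size of the coefficients involved. (Also, in your finite-index case, the claim $N^{[N:H]} \leq H$ fails for non-normal $H$ --- the coset action only gives $g^{d} \in H$ for $d$ the orbit length of the coset $H$ --- so the passage to the normal core needs a separate, nilpotency-specific argument to remain polynomial.) The paper closes this gap without canonical bases, by inducting on a maximal central series $\{N_i\}$ and proving three quantitative statements: $\|H\cap N_i\|_S$ is polynomial in $\|H\|_S$ (Proposition \ref{norm_intersect_central}, via an effective B\'ezout bound); every $g$ with $\pi_{N_{h(N)}}(g) \in \pi_{N_{h(N)}}(H)$ differs from $H$ by an element $z$ of the central infinite cyclic term $N_{h(N)}$ with $\|z\|_S$ polynomial in $\max\{\|H\|_S,\|g\|_S\}$ (Proposition \ref{length_of_g_mod_H}); and a central element $z \notin H$ is separated from $H$ in $N/N^{p^k}$ with $p^k \preceq (\|H\|_S)^m \log \|z\|_S$ (Proposition \ref{central_element_sep_subgroup}). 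Your suggested alternative --- rerunning the proof of Theorem \ref{effective_separation_subgroup_thm} while tracking the dependence of its constants on $\|H\|_S$ --- is in fact exactly what the paper does, but those polynomial dependences are the content of the three propositions above and cannot simply be asserted.
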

The proof of Theorem \ref{effective_subgroup_separability_thm} is done in two steps. For the upper bound, we introduce some tools that bound the norm of the intersection of a subgroup $H \leq N$ with terms of a central series by the norm of $H$. To establish the lower bound, we construct a sequence of subgroups $H_i$ and elements $g_i$ such that $g_i \notin H_i$ and the order of the minimal finite quotient that distinguishes $g_i$ and $H_i$ is bounded below by $\Vert H_i \Vert_S$. Note that we could express the constant $k$ in terms of the Hirsch length of $N$, but we avoid this since the constructed bound in Theorem \ref{effective_subgroup_separability_thm} is far from sharp.

Finally, we show that the separability functions above behave well under finite extensions. As an application, we generalize the polynomial upper bounds and lower bounds to virtually nilpotent groups. We state some open related questions in the last section.

\section{Background and notation}

Let $G$ be a finitely generated group. We denote the identity element of $G$ as $1$, and when $G$ is an abelian group, we use  additive notation and take $0$ as the identity. We denote the commutator of $x,y \in G$ as $[x,y] = x \: y \: x^{-1} \: y^{-1}$. For two subsets $A,B \leq G$, we denote $[A,B]$ to be the subgroup of $G$ generated by elements of the form $[a,b]$ for $a \in A$ and $b \in B$. For  $H \nsub G$, we set $\map{\pi_H}{G}{\faktor{G}{H}}$ to be the natural projection. We define $G^m$ to be the subgroup generated by $m$-th powers of elements in $G$ with associated projection $\pi_{m}.$ We denote the center of the group $G$ as $Z(G)$, and the order of a finite group is denote as $|G|$. The order of an element $g \in G$ is denoted as $\Ord_G(g)$. For a natural $m$ and a prime $p$, we denote $\nu_p(m)$ as the largest power of $p$ such that $p^{\nu_p(m)}$ divides $m$. \label{mappik}

For any subset $X \subseteq G$, we let $\langle X \rangle$ be the subgroup generated by the set $X$. When $G$ is finitely generated with a finite generating subset $S$, we write $\|x\|_S$ as the word length of $x$ with respect to $S$. We can also define the norm of subgroups of $G$.

\begin{defn}
	Let $G$ be a finitely generated group with a finite generating subset $S$. For any finite subset $X \subseteq G$, define $\|X\|_S \bdef \max \left\{\|x\|_S \: | \: x \in X \right\}$. For a finitely generated subgroup $H \leq G$, let
	$$
	\|H\|_S \bdef \min \left\{ \|X\|_S \: | \: X \text{ is a finite generating subset for } H \right\}.
	$$
\end{defn}
If $S_1$ and $S_2$ are any two finite generating subsets of a group $G$, then there exists a constant $C > 0$ such that $\| g \|_{S_2} \leq C \| g \|_{S_1}$ for all $g \in G$. It then follows that $\|H\|_{S_2} \leq C \|H\|_{S_1}$, since if $\{h_i\}_{i=1}^k$ is a set of generators for $H$ such that $\|h_i\|_{S_1} \leq \|H\|_{S_1}$ for all $i$, then $$\|h_i\|_{S_2} \leq C \|h_i\|_{S_1} \leq C \|H\|_{S_1}.$$ Since these elements generate $H$, we have the following lemma.
\begin{lemma}\label{subgroup_norm_lemma}
	Let $G$ be a finitely generated group with a finite generating subsets $S_1$ and $S_2$. There exists a constant $C > 0$ such that if $H \leq G$ is a finite generated subgroup, then $$C^{-1}\|H\|_{S_1} \leq \|H\|_{S_2} \leq C \: \|H\|_{S_1}.$$
\end{lemma}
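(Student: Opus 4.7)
The plan is to observe that the lemma is essentially a repackaging of the standard fact that word metrics with respect to two finite generating sets of the same group are bi-Lipschitz equivalent, applied uniformly to any choice of generating set for the subgroup $H$.

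First I would invoke (or briefly reprove) the classical fact: for any two finite generating subsets $S_1, S_2$ of $G$, there is a constant $C_{12} > 0$ such that $\|g\|_{S_2} \le C_{12} \|g\|_{S_1}$ for all $g \in G$. This follows by writing each element of $S_1$ as a word in $S_2$ and taking $C_{12}$ to be the maximum of the $S_2$-lengths of such expressions; then any geodesic word representing $g$ over $S_1$ expands to an $S_2$-word of length at most $C_{12}\|g\|_{S_1}$. The same argument with the roles of $S_1$ and $S_2$ swapped produces a constant $C_{21}$ with $\|g\|_{S_1} \le C_{21}\|g\|_{S_2}$.

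Next I would transfer this inequality from elements to subgroup norms. Given a finitely generated subgroup $H \le G$, choose a finite generating set $X = \{h_1, \dots, h_k\}$ of $H$ with $\|h_i\|_{S_1} \le \|H\|_{S_1}$ for all $i$, which exists by definition of $\|H\|_{S_1}$. Then $\|h_i\|_{S_2} \le C_{12} \|h_i\|_{S_1} \le C_{12}\|H\|_{S_1}$ for each $i$, so $X$ is a generating set for $H$ whose $S_2$-norm is at most $C_{12}\|H\|_{S_1}$. Taking the minimum over all generating sets gives $\|H\|_{S_2} \le C_{12}\|H\|_{S_1}$. The symmetric argument yields $\|H\|_{S_1} \le C_{21}\|H\|_{S_2}$, and setting $C = \max(C_{12}, C_{21})$ finishes the proof.

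There is no genuine obstacle here; the only subtle point is that the optimal $X$ for the $S_1$-norm need not be optimal for the $S_2$-norm, so the bound $\|H\|_{S_2} \le C_{12}\|H\|_{S_1}$ is obtained via the general inequality "minimum over generating sets is at most the value on any specific one," which is exactly what the above calculation uses. The entire argument uses nothing beyond the definitions and the element-level bi-Lipschitz comparison that the paper has already stated in the paragraph preceding the lemma.
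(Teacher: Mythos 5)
Your proof is correct and follows essentially the same route as the paper: the element-level bi-Lipschitz comparison of word lengths is applied to a generating set of $H$ realizing $\|H\|_{S_1}$, and the symmetric argument gives the other inequality. No issues.
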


\paragraph{Nilpotent groups}
$ $\newline
The groups we work with in this paper are nilpotent groups, and we recall their definition and basic properties. See \cite{Hall_notes,Segal_book_polycyclic} for a more complete account of the theory of nilpotent groups.

A \textit{central series} for a group $N$ is a sequence of subgroups $N = N_0 \geq N_1 \geq \ldots \geq N_k = 1$ such that $[N,N_{i}] \leq N_{i+1}$ for all $i$. The $i$-th term of the \emph{lower central series} is defined by $\ga_1(N) = N$ and inductively by $\ga_i(N) = [N,\ga_{i-1}(N)]$. We say that a group $N$ is \emph{nilpotent of step size $c$} if $c$ is the minimal natural number such that $\ga_{c+1}(N) = 1$.  When $N$ is nilpotent, we denote its nilpotency class of $N$ as $c(N)$, and if we don't specify the step size, we say that $N$ is a nilpotent group. When $N$ is nilpotent, the lower central series forms, as the name says, a central series for the group $N$. We define the \emph{Hirsch length of $N$} as
	$$
	h(N) \bdef \sum_{i=1}^{c(N)}\rank_\Z \pr{\factor{\ga_i(N)}{\ga_{i+1}(N)}}.
	$$
	If $N$ is a torsion-free, finitely generated nilpotent group, we say $N$ is a $\mathcal{F}$-group.

\begin{defn}
	Let $N$ be a $\mathcal{F}$-group. We call a central series $\set{N_i}_{i=0}^{h(N)}$ \emph{maximal} if $\faktor{N_i}{N_{i+1}}\cong \Z$ for all $0 \leq i \leq h(N)-1$. 
\end{defn}
Maximal series always exist for $\mathcal{F}$-groups; however, they are not unique. Their existence is guaranteed by \cite[Lemma 8.23(c)]{Eick_Holt_obrien}.

\begin{defn}
Let $N$ be a $\mathcal{F}$-group, and let $H \leq N$ be a subgroup. We define the \emph{isolator of $H$ in $N$}, denoted $\sqrt[N]{H}$, as the set
	$$
	\sqrt[N]{H} \bdef \set{ \left. x \in N \: \right| \: \text{ there exists a } k \in \N \text{ such that } x^k \in H}.
	$$
\end{defn}
From \cite{Segal_book_polycyclic}, it follows that if $N$ is a finitely generated nilpotent group, then $\sqrt[N]{H}$ is a subgroup for all $H \leq N$, which moreover satisfies $|\sqrt[N]{H}:H| < \infty$. 

\begin{defn}
	Let $N$ be a $\mathcal{F}$-group, and let $z \in N$ be a primitive central element of $N$. An \emph{admissible quotient} of $N$ associated to $z$ is a quotient $\faktor{N}{H}$ such that $\faktor{N}{H}$ is a $\mathcal{F}$-group where $Z\left( \faktor{N}{H} \right) = \innp{\pi_H(z)}$.
\end{defn}

For any primitive central element, the existence of an associated one dimensional central quotient of $N$ is guaranteed by \cite[Proposition 3.1]{Pengitore_1}. 

\begin{defn}\label{constants_defn}
	Let $N$ be a $\mathcal{F}$-group. We define $\Phi(N)$ to be the smallest integer such that for every primitive element $z \in Z(N)$, there exists an admissible quotient $\faktor{N}{H}$ associated to $z$ such that $h\left(\faktor{N}{H}\right) \leq \Phi(N)$. For a normal subgroup $H \nsub N$ of a $\mathcal{F}$-group, we define $\psi(N,H) = \Phi\left(\faktor{N}{ \sqrt[N]{H}}\right)$. 
\end{defn}
For a torsion-free, finitely generated abelian group $A$ and any primitive element $z$, one can see if $\faktor{A}{B}$ is an admissible quotient of $A$ associated to $z$, then $\faktor{A}{B} \cong \Z$. In particular, $\Phi(A) = 1$.

\paragraph{Effective separability}$ $\newline
Let $G$ be a group with a proper subgroup $H$. Following \cite{Bou_Rabee10}, we defined the relative depth function $
\map{\D_G(H,\cdot)}{G \setminus H}{\mathbb{N} \cup \set{\infty}}$ of $H$ in $G$ as
$$
\D_G(H,g) \bdef \min \set{|Q| \: \: \suchthat  \: \text{ there exists a } \map{\pi}{G}{Q} \text{ such that } |Q| < \infty \text{ and } \pi(g) \notin \pi(H)}
$$
with the understanding that $\D_G(H,g) = \infty$ if no such $Q$ exists.
\begin{defn}
	We say that a finitely generated subgroup $H \leq G$ is \emph{separable} if $\D_G(H,g) < \infty$ for all $g \in G \setminus H$. We say that a finite group $Q$ \emph{separates} $H$ and $g$ if there exists a surjective group morphism $\map{\pi}{G}{Q}$ such that $\pi(g) \notin \pi(H)$. We refer to the process of finding finite groups that separate $H$ from elements of $G \setminus H$ as \emph{separating $H$ in $G$.}
\end{defn}

With the above definition in mind, we can define the residual property of interest for this article.
\begin{defn}
	We say a finitely generated group $G$ is \emph{subgroup separable} if every finitely generated group is separable. 
\end{defn}

Now assume that $G$ is finitely generated by a finite generating subset $S$, and let $H \leq G$ be a proper, finitely generated, separable subgroup. To quantify the complexity of separating $H$ in $G$, we introduce the function $\map{\Farb_{G,H,S}}{\N}{\N}$ given by
$$
\Farb_{G,H,S}(n) \bdef \max \set{\D_G(H,g) \: | \: \|g\|_S \leq n \text{ and } g \in G \setminus H}.
$$

\begin{lemma}
	If $S_1$ and $S_2$ are two finite generating subsets of $G$ and $H$ is a finitely generated separable subgroup, then $\Farb_{G,H,S_1}(n) \approx \Farb_{G,H,S_2}(n)$.
\end{lemma}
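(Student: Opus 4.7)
The plan is to show this is a routine consequence of the Lipschitz equivalence of word metrics under change of finite generating set, essentially mimicking the proof of Lemma \ref{subgroup_norm_lemma} but for elements instead of subgroups.

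First, I would invoke the standard fact (already recorded in the paragraph preceding Lemma \ref{subgroup_norm_lemma}) that for two finite generating subsets $S_1, S_2$ of $G$ there exists a constant $C \geq 1$ such that
$$
\|g\|_{S_2} \leq C\,\|g\|_{S_1} \quad \text{and} \quad \|g\|_{S_1} \leq C\,\|g\|_{S_2}
$$
for every $g \in G$. This is obtained by writing each element of $S_1$ as a word in $S_2$ (and vice versa) and taking $C$ to be the maximum of these word lengths.

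Next, I would observe that the depth function $\D_G(H,\cdot)$ itself is intrinsic to $G$ and $H$ and does not depend on any choice of generating subset; only the domain over which we maximize in the definition of $\Farb$ depends on $S$. Hence if $g \in G \setminus H$ satisfies $\|g\|_{S_1} \leq n$, then $\|g\|_{S_2} \leq C n$, so
$$
\set{ g \in G \setminus H \suchthat \|g\|_{S_1} \leq n } \;\subseteq\; \set{ g \in G \setminus H \suchthat \|g\|_{S_2} \leq C n }.
$$
Taking the maximum of $\D_G(H,\cdot)$ over each side yields
$$
\Farb_{G,H,S_1}(n) \;\leq\; \Farb_{G,H,S_2}(Cn) \;\leq\; C \cdot \Farb_{G,H,S_2}(Cn),
$$
which is precisely the statement $\Farb_{G,H,S_1}(n) \preceq \Farb_{G,H,S_2}(n)$ with the same constant $C$.

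Finally, interchanging the roles of $S_1$ and $S_2$ gives the reverse inequality $\Farb_{G,H,S_2}(n) \preceq \Farb_{G,H,S_1}(n)$, and together these establish $\Farb_{G,H,S_1}(n) \approx \Farb_{G,H,S_2}(n)$. There is no genuine obstacle here; the main point is simply that the equivalence relation $\approx$ was set up precisely to absorb the multiplicative discrepancy between word metrics, so the verification is essentially bookkeeping.
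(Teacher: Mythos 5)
Your argument is correct and is exactly the ``standard argument for the word norm'' that the paper invokes without writing out: the paper's own justification is the one-line remark that the proof follows from standard word-norm arguments, and your bi-Lipschitz comparison of the maximizing sets (with the observation that $\D_G(H,\cdot)$ is independent of $S$) is precisely that argument, correctly matched to the definition of $\preceq$.
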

The proof follows from standard arguments for the word norm. 

It is well known that every subgroup of a finitely generated nilpotent group is finitely generated. Thus, whenever we reference the function $\Farb_{N,H,S}(n)$ for a $\mathcal{F}$-group $N$ and a finitely generated subgroup $H \leq N$, we will simply say that $H$ is a subgroup.

The following function allows us to quantify the complexity of subgroup separability for any finitely generated subgroup separable group $G$ with a finite generating subset $S$.
\begin{defn}
	Let $G$ be a finitely generated subgroup separable group with a finite generating subset $S$. Define $\map{\Sub_{G,S}}{\N}{\N}$ as
	$$
	\Sub_{G,S}(n) \bdef \max\{\D_G(H,g) | \: H \leq G \text{ finitely generated, } g \in G \setminus H, \text{ and } \|H\|_S, \|g\|_S \leq n\}.
	$$
\end{defn}

\begin{lemma}
	If $S_1$ and $S_2$ are two finite generating subsets of $G$, then $\Sub_{G,S_1}(n) \approx \Sub_{G,S_2}(n)$.
\end{lemma}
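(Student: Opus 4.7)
The plan is to mimic the proof of the corresponding statement for $\Farb_{G,H,S}$, exploiting that the depth $\D_G(H,g)$ is an intrinsic quantity of the triple $(G,H,g)$ and does not depend on any choice of generating set. The only thing that depends on the generating subset is the collection of pairs $(H,g)$ that satisfy the norm bound $\|H\|_S, \|g\|_S \leq n$.

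First, by Lemma \ref{subgroup_norm_lemma}, there exists a constant $C_1 > 0$ such that $\|H\|_{S_2} \leq C_1 \|H\|_{S_1}$ for every finitely generated subgroup $H \leq G$. By the standard bi-Lipschitz equivalence of word norms with respect to different finite generating sets, there exists $C_2 > 0$ such that $\|g\|_{S_2} \leq C_2 \|g\|_{S_1}$ for all $g \in G$. Set $C = \max\{C_1, C_2\}$.

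Next, I would show $\Sub_{G,S_1}(n) \preceq \Sub_{G,S_2}(n)$ as follows. Suppose $(H,g)$ is a pair with $H \leq G$ a finitely generated subgroup, $g \in G \setminus H$, and $\|H\|_{S_1}, \|g\|_{S_1} \leq n$. Then $\|H\|_{S_2}, \|g\|_{S_2} \leq C n$, so this pair also contributes to the maximum defining $\Sub_{G,S_2}(Cn)$. Hence $\D_G(H,g) \leq \Sub_{G,S_2}(Cn)$. Taking the maximum over all admissible pairs $(H,g)$ for $S_1$ gives $\Sub_{G,S_1}(n) \leq \Sub_{G,S_2}(Cn) \leq C \cdot \Sub_{G,S_2}(Cn)$, which is precisely $\Sub_{G,S_1}(n) \preceq \Sub_{G,S_2}(n)$. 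Swapping the roles of $S_1$ and $S_2$ yields the reverse inequality, and hence $\Sub_{G,S_1}(n) \approx \Sub_{G,S_2}(n)$.

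There is no real obstacle; the only subtlety worth stating is that one must simultaneously control the norm of the subgroup and the norm of the element, which is why both Lemma \ref{subgroup_norm_lemma} and the classical bi-Lipschitz statement for word norms are invoked.
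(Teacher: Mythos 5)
Your proof is correct and is exactly the argument the paper intends: the paper gives no details, merely citing the standard change-of-generating-set argument of Bou-Rabee together with Lemma \ref{subgroup_norm_lemma}, and your write-up fills in precisely those details (intrinsicness of $\D_G(H,g)$, bi-Lipschitz comparison of element norms, and the subgroup-norm comparison). No further comment is needed.
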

As before, the proof is similar to \cite[Lemma 1.1]{Bou_Rabee10}, but we additionally appeal to Lemma \ref{subgroup_norm_lemma}.

We note that for a $\mathcal{F}$-group $N$, we may define $\Sub_{N,S}(n)$ as
$$
\Sub_{N,S}(n) = \max \set{\D_G(H,g) \: | \: H \leq N, g \in N \setminus H \text{ and } \|H\|_S,\|g\|_S \leq n}.
$$

\section{Intersections of Subgroups with Normal Series and Applications}
Let $N$ be a $\mathcal{F}$-group with a finite generating subset $S$ and a maximal central series $\set{N_i}_{i=0}^{h(N)}$. For any subgroup $H \leq N$, we seek to estimate $\|H \cap N_i\|_S$ in terms of $\|H\|_S$. This result will be essential for the inductive step in the proof of Theorem \ref{effective_subgroup_separability_thm}.

The following theorem is an effective version of Bezout's Lemma for $\Z$. This result and its proof are originally from \cite{optimal_Bezout_coefficients_bound}.

\begin{thm}
	\label{effBez}
	Let $a_1, \ldots, a_n \in \Z$ be any number of integers. There exist $x_1, \ldots, x_n \in \Z$ with $\vert x_i \vert \leq \frac{\max \{ \vert a_i \vert \}}{2}$ such that $$\sum_{i=1}^n x_i \: a_i = \gcd(a_1,\ldots, a_n).$$
\end{thm}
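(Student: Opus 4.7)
The plan is to proceed by induction on $n$, with the key observation that the set of all integer solutions $(x_1,\ldots,x_n)$ to $\sum x_i a_i = d$ (where $d = \gcd(a_1,\ldots,a_n)$) is a coset of the rank $(n-1)$ relation lattice
\[
L = \{y \in \Z^n : y_1 a_1 + \cdots + y_n a_n = 0\}.
\]
Producing small coefficients amounts to choosing a short representative in this coset, and short representatives can be obtained by reducing an initial Bezout solution modulo well-chosen short vectors of $L$, most notably the pairwise relations that have $a_j/\gcd(a_i,a_j)$ in coordinate $i$ and $-a_i/\gcd(a_i,a_j)$ in coordinate $j$.

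For the base case $n=2$, I would assume without loss of generality that $|a_1|\le|a_2|=M$. Starting from any classical Bezout representation $y_1 a_1 + y_2 a_2 = d$, every other integer solution has the form $(y_1 + k\,a_2/d,\; y_2 - k\,a_1/d)$ for $k\in\Z$. Choosing $k$ to place $x_1 := y_1 + k(a_2/d)$ in a fundamental domain of length $a_2/d$ centered at $0$ gives $|x_1| \le a_2/(2d) \le M/2$. The remaining coefficient $x_2 = (d - x_1 a_1)/a_2$ is then bounded using $d \le |a_1| \le |a_2|$; a careful rounding choice yields $|x_2|\le M/2$ as well.

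For the inductive step from $n-1$ to $n$, set $d' = \gcd(a_1,\ldots,a_{n-1})$ and $d = \gcd(d',a_n)$. The induction hypothesis gives $y_1,\ldots,y_{n-1}$ with $|y_i|\le M/2$ and $\sum_{i<n} y_i a_i = d'$, and the $n=2$ case gives $\alpha,\beta$ with $\alpha d' + \beta a_n = d$ and $|\alpha|,|\beta|$ suitably controlled. The naive combination $x_i=\alpha y_i$ ($i<n$), $x_n=\beta$ is a valid solution but in general violates the bound, so the next step is to reduce the offending coordinates by adding pairwise relation vectors from $L$, descending each coordinate below $M/2$ one at a time.

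The main obstacle is exactly this bookkeeping in the inductive step: each reduction of one coordinate perturbs at least one other coordinate, and one must order the reductions (or treat them simultaneously as a coset reduction in $L$) so that a cascade never reintroduces a coefficient larger than $M/2$. This delicate tracking, exploiting the full structure of $L$ rather than a single pairwise relation, is the content of the argument in \cite{optimal_Bezout_coefficients_bound} and is what allows the sharp constant $1/2$ in the bound.
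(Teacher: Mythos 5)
The paper does not actually prove Theorem \ref{effBez}: it imports both the statement and its proof verbatim from \cite{optimal_Bezout_coefficients_bound}, so there is no internal argument to compare yours against. Judged on its own terms, your proposal is a correct framing of the problem (the solution set is a coset of the rank-$(n-1)$ relation lattice $L$, and one wants a short coset representative) but it is not a proof, because the entire difficulty is concentrated in the inductive step and at precisely that point you write that the required bookkeeping ``is the content of the argument in \cite{optimal_Bezout_coefficients_bound}.'' Deferring the one nontrivial step to the reference you are meant to be reproving is a genuine gap, and the gap is not cosmetic. Concretely: starting from $x_i=\alpha y_i$ for $i<n$ and $x_n=\beta$, the only coordinate that is automatically small is $x_n$, since $|\beta|\le d'/(2d)\le \min_{i<n}|a_i|/2$, while $|x_i|\le |\alpha|\,|y_i|$ can be of order $M^2$. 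Reducing each $x_i$ ($i<n$) by the pairwise relation whose nonzero entries are $a_n/\gcd(a_i,a_n)$ and $-a_i/\gcd(a_i,a_n)$ does bring $|x_i|\le |a_n|/(2\gcd(a_i,a_n))\le M/2$, but each such reduction shifts $x_n$ by a multiple of $a_i/\gcd(a_i,a_n)$ of size comparable to $M$ times the reduction step, so after all $n-1$ reductions $x_n$ is again unbounded; reducing $x_n$ in turn re-inflates one of the other coordinates. You assert that a suitable ordering of these reductions avoids the cascade, but you exhibit no such ordering and give no invariant that would make one terminate, and it is exactly this point that a proof must settle (the reference's argument is necessarily more global than a sequence of pairwise reductions).

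A secondary issue is the base case. With $|x_1|\le |a_2|/(2d)$ one only gets $|x_2|\le |d|/|a_2|+|a_1|/(2|d|)$, which exceeds $M/2$ in boundary configurations such as $|a_1|=|a_2|$; indeed the statement as printed fails outright for $a_1=a_2=1$, where the bound forces $x_1=x_2=0$. So even for $n=2$ the degenerate cases (one $a_i$ dividing another, $\max|a_i|\le 1$) must be handled explicitly rather than absorbed into ``a careful rounding choice.'' Note that for the paper's application (Lemma \ref{effBez2}) these degeneracies are harmless, but a self-contained proof of the theorem cannot ignore them.
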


Note that the subgroup $H = \langle a_1, \ldots, a_n \rangle \le \Z$ satisfies $H = \gcd(a_1, \ldots, a_n) \: \Z$. In general, we don't have a bound on the number of generators of a subgroup. Therefore, we need the following lemma for subgroups of $\Z$ which measures the length of the standard generating subset of $\Z$ with respect to the given finite generating subset.

\begin{lemma}
	\label{effBez2}
	Let $S$ be any finite generating subset for $d \Z$ and assume that $\vert s \vert \leq n$ for all $s \in S$. Then $$\Vert d \Vert_S \leq n^2.$$
\end{lemma}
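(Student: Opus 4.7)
The plan is to combine Theorem \ref{effBez} with the observation that a generating set $S$ for $d\Z$ consisting of integers of absolute value at most $n$ cannot be too large. Specifically, after throwing away $0$ (which does not affect either the generated subgroup or the word length), every element of $S$ is a nonzero multiple of $d$ with $|s| \leq n$, and so $S$ is contained in the set $\{ \pm d, \pm 2d, \ldots, \pm \lfloor n/d \rfloor d\}$. In particular, $|S| \leq 2 \lfloor n/d \rfloor \leq 2n/d$. We may also assume $d > 0$ since the case $d = 0$ is trivial ($S \subseteq \{0\}$).

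Next I would write $S = \{a_1, \ldots, a_k\}$ and apply Theorem \ref{effBez} to the integers $a_1, \ldots, a_k$. Since $S$ generates $d\Z$, we have $\gcd(a_1, \ldots, a_k) = d$, and Theorem \ref{effBez} yields integers $x_1, \ldots, x_k$ with $|x_i| \leq \max_j |a_j|/2 \leq n/2$ such that
\[
\sum_{i=1}^k x_i \, a_i = d.
\]
This expression realizes $d$ as a word in $S \cup (-S)$ of length at most $\sum_{i=1}^k |x_i|$, so
\[
\|d\|_S \leq \sum_{i=1}^k |x_i| \leq k \cdot \frac{n}{2} \leq \frac{2n}{d} \cdot \frac{n}{2} = \frac{n^2}{d} \leq n^2,
\]
as desired.

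The main obstacle, or rather the main thing one must notice, is that Theorem \ref{effBez} by itself only controls the size of each Bezout coefficient and not the number of coefficients, whereas the target bound $n^2$ is independent of $|S|$. The resolution is the elementary but essential counting remark above: because the generators live inside $d\Z$ and are bounded in absolute value by $n$, there are at most $2n/d$ of them, which is exactly what converts the per-coefficient bound of $n/2$ into the global bound $n^2$. No further technicalities arise.
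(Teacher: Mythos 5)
Your proof is correct and follows essentially the same route as the paper's: discard $0$ from $S$, bound $|S|$ by counting the admissible generators, and apply Theorem \ref{effBez} to convert the per-coefficient bound $n/2$ into the global bound $n^2$. The only difference is that you count the generators as multiples of $d$ to get $|S| \leq 2n/d$, slightly sharpening the paper's bound $|S| \leq 2n$, but this does not change the argument or the stated conclusion.
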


\begin{proof}
	We may assume that $0 \notin S$; hence, there are at most $2n$ elements in $S$. Write $S = \{s_1, \ldots, s_k\}$ with $k \leq 2n$. By using Theorem \ref{effBez}, we get that $d = \sum_{i=1}^k s_i \: x_i$ with $\vert x_i \vert \leq  \frac{ n }{2}$. In particular, 
	$$\Vert d \Vert_S \leq \sum_{i=1}^k \vert x_i \vert \leq k \frac{n}{2} \leq n^2. \qedhere$$ \end{proof}

\noindent From the proof, it follows that this result can easily be improved to a bound of $n \: \log(n)$; however, this does not improve other results so we only formulate it as $n^2$.

The following lemma gives a bound for the norm of $\ga_2(N)$ for a $\mathcal{F}$-group $N$ dependent only on the nilpotency class of $N$.
\begin{lemma}
\label{normgamma2}
For every $c \in \N$, there exists a constant $K_c$ such that for all nilpotent groups $N$ of nilpotency class at most $c$ and every finite generating subset $S$ for $N$, it holds that $\Vert \gamma_2(N) \Vert_S \leq K_c.$
\end{lemma}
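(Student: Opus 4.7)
The plan is to exhibit, for every nilpotent group $N$ of class at most $c$ with generating subset $S$, a finite generating subset of $\gamma_2(N)$ each of whose elements has word length in $S$ bounded by a constant depending only on $c$. The natural candidates are the left-normed simple commutators $[s_{i_1}, s_{i_2}, \ldots, s_{i_k}]$ with $s_{i_j} \in S$ and $2 \leq k \leq c$.

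The first step is to invoke the classical fact from the theory of nilpotent groups that $\gamma_2(N)$ is generated, as a subgroup, by the set of simple commutators of weight between $2$ and $c$ in elements of $S$. This is proved by downward induction on $i$: the quotient $\gamma_i(N)/\gamma_{i+1}(N)$ lies in the center of $N/\gamma_{i+1}(N)$, and the commutator identities $[xy,z] \equiv [x,z][y,z]$ and $[x,yz] \equiv [x,y][x,z]$ modulo $\gamma_{i+1}(N)$ reduce the generation of $\gamma_i(N)/\gamma_{i+1}(N)$ to commutators of weight exactly $i$ in elements of $S$. Iterating from $i=c$ down to $i=2$ yields the claim.

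The second step is a routine length bound: from $\|[a,b]\|_S \leq 2\|a\|_S + 2\|b\|_S$ and the recursion $w_k \leq 2 w_{k-1} + 2$ with $w_1 = 1$, one concludes that a simple commutator of weight $k$ has $S$-length at most $3 \cdot 2^{k-1} - 2$. Setting $K_c := 3 \cdot 2^{c-1} - 2$ and noting that the set of all simple commutators of weight between $2$ and $c$ in the finite set $S$ is itself finite, we obtain a finite generating subset of $\gamma_2(N)$ each of whose elements has $S$-length at most $K_c$, so $\|\gamma_2(N)\|_S \leq K_c$ as required.

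The main obstacle is pinning down the precise form of the generation statement in the first step; once the correct formulation (simple commutators of weight $\leq c$ in $S$ generate $\gamma_2(N)$ as a subgroup, not merely as a normal subgroup) is cited from a standard source such as Hall's notes, the length estimate is completely elementary and the constant $K_c$ is explicit.
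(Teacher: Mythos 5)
Your proposal is correct and follows essentially the same route as the paper: both invoke P.~Hall's classical result that $\gamma_2(N)$ is generated (as a subgroup) by the simple commutators of weight at most $c$ in the generators, and then observe that each such commutator has $S$-length bounded by a constant depending only on $c$. Your version merely spells out the explicit bound $K_c = 3\cdot 2^{c-1}-2$ and sketches the induction behind Hall's lemma, which the paper leaves to the citation.
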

\begin{proof}
From \cite[Lemma 1.7]{Hall_notes}, it follows for any finite generating subset $S$ of $N$, the subgroup $\gamma_2(N)$ is generated by the elements of the form $[s_1,[s_2, \cdots,[s_{k-1},s_k], \cdots]]$ with $s_i \in S$ and $k \leq c$. The lemma is now immediate.
\end{proof}

Suppose that $N$ is a $\mathcal{F}$-group with a finite generating subset $S$ and a maximal central series $\set{N_i}_{i=0}^{h(N)}$, and let $H \leq N$ be a subgroup. Lemma \ref{effBez2} allows us to estimate $\|H \cap N_i\|_S$ in terms of $\|H\|_S$.

\begin{prop}\label{norm_intersect_central}
Let $N$ be a $\mathcal{F}$-group with a finite generating subset $S$. Let $\set{N_i}_{i=0}^{h(N)}$ be a maximal normal series. There exists a natural number $k$ and a constant $C>0$ such that for every $i \geq 0$ and every subgroup $H \leq N$, it holds that $$
\|H \cap N_i\|_S \leq C (\|H\|_S)^k.$$
\end{prop}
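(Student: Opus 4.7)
The plan is to proceed by induction on $i$, using the trivial base case $H \cap N_0 = H$. Since $i$ ranges over the finite set $\{0, \ldots, h(N)\}$, the uniform exponent $k$ in the conclusion will be the maximum of the exponents produced at each level. At each step, the goal is to exhibit a generating set of $H \cap N_{i+1}$ whose elements have $S$-norm polynomially bounded in $M := \|H \cap N_i\|_S$, and then invoke the inductive hypothesis to bound $M$ polynomially in $\|H\|_S$.

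For the core construction, I fix a generating set $\{h_1, \ldots, h_m\}$ of $H \cap N_i$ with each $\|h_j\|_S \leq M$ and consider the projection $\phi : N_i \to N_i/N_{i+1} \cong \Z$. Writing $a_j = \phi(h_j)$, one has $\phi(H \cap N_i) = d\,\Z$ with $d = \gcd(a_1, \ldots, a_m)$. Theorem \ref{effBez} supplies integers $x_j$ with $|x_j| \leq \tfrac{1}{2}\max_j |a_j|$ and $\sum_j x_j a_j = d$, so that $h_0 := \prod_j h_j^{x_j}$ satisfies $\phi(h_0) = d$. I then claim that $H \cap N_{i+1}$ is generated by $\{h_j h_0^{-a_j/d}\}_{j=1}^m$ together with any generating set of $\gamma_2(H \cap N_i)$. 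The inclusion $\gamma_2(H \cap N_i) \leq H \cap N_{i+1}$ follows from the central series condition $[N, N_i] \leq N_{i+1}$, and the generation claim is verified in the abelian quotient $(H \cap N_i)/\gamma_2(H \cap N_i)$: any $h \in H \cap N_{i+1}$ reduces modulo $\gamma_2(H \cap N_i)$ to a product $\prod_j h_j^{\alpha_j}$ with $\sum_j \alpha_j a_j = 0$, and substituting $h_j = (h_j h_0^{-a_j/d})\, h_0^{a_j/d}$ collapses the product to $\prod_j (h_j h_0^{-a_j/d})^{\alpha_j}$ because the total $h_0$-exponent $\sum_j \alpha_j (a_j/d)$ vanishes.

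To control the $S$-norms, Lemma \ref{normgamma2} applied to $H \cap N_i$ as a nilpotent group of class at most $c(N)$, with generating set $\{h_j\}$, produces a generating set of $\gamma_2(H \cap N_i)$ of $\{h_j\}$-length at most $K_{c(N)}$, hence of $S$-norm at most $K_{c(N)}\,M$. For the other generators, $\|h_j h_0^{-a_j/d}\|_S \leq M + (|a_j|/d)\,\|h_0\|_S$, so everything reduces to polynomial control of $|a_j|$, $m$, and $\|h_0\|_S$ in terms of $M$. This is the main obstacle, and it rests on a standard estimate for $\mathcal{F}$-groups equipped with a maximal central series: the coordinates of an element of $S$-norm at most $M$ in Mal'cev-type coordinates are polynomially bounded in $M$, with degree equal to the weight. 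Granting this, $|a_j| \leq f(M)$ for a fixed polynomial $f$, giving $\|h_0\|_S \leq m\,f(M)\,M/2$; moreover, $m$ may be taken at most $h(N)$ by replacing $\{h_j\}$ with a Mal'cev-like basis of $H \cap N_i$, at the cost of a further polynomial $S$-norm blow-up. Collecting the bounds yields $\|H \cap N_{i+1}\|_S$ polynomial in $M$, which closes the induction.
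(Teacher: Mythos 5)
Your level-by-level construction --- project to $N_i/N_{i+1} \cong \Z$, use Theorem \ref{effBez} to produce $h_0$ with $\phi(h_0) = d$, correct each generator to $h_j h_0^{-a_j/d}$, adjoin generators of the commutator subgroup via Lemma \ref{normgamma2}, and verify generation in the abelianization --- is exactly the construction in the paper, so the core of your argument is sound and matches. The organizational difference is that you induct on $i$ inside the fixed group $N$ with the fixed generating set $S$, whereas the paper inducts on the Hirsch length: it only ever treats the top quotient $N/N_1$, then replaces $N$ by $N_1$ equipped with a new adapted generating set $S'$, and uses the polynomial distortion estimate \cite[3.B2]{Gromov} to translate $S'$-norms back into $S$-norms. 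That reorganization buys two things which you instead have to assert. First, with an adapted generating set the projection onto the top $\Z$-quotient is $1$-Lipschitz, so $|a_j| \leq \|h_j\|_S$ comes for free; in your version the bound $|a_j| \leq f(M)$ is precisely the Gromov-type distortion/coordinate estimate, which is legitimate but should be cited rather than merely ``granted.'' Second, and more substantively, your bound $\|h_0\|_S \leq m\, f(M)\, M/2$ drags in the number of generators $m$, and your proposed remedy --- that $H \cap N_i$ admits a Mal'cev-like basis of size at most $h(N)$ with only polynomial norm blow-up --- is an unproved claim of strength comparable to the proposition itself, so as written this is the one genuinely under-justified step. The paper sidesteps it inside Lemma \ref{effBez2}: in $\Z$ one may discard duplicate generators, so at most $2n$ of them survive and the count is absorbed into the polynomial. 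The same repair works for you directly: the gcd depends only on the set of distinct values among the $a_j$, of which there are at most $2f(M)+1$, so restrict the Bezout combination to one $h_j$ per value. With that substitution (and a citation for the coordinate bound) your induction closes and yields the stated conclusion.
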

\begin{proof}
	We prove the proposition by induction on Hirsch length, and note that the base case is clear by definition. Thus, we may assume that $h(N) > 1$. For notational simplicity, we let $\pi = \pi_{N_1}$.
	
	The statement is invariant under change of finite generating subset, so we may assume that there exists a $s_0 \in S$ such that $\faktor{N}{N_1} \cong \innp{\pi(s_0)}$ and $S = \left\{ s_0\right\} \cup S^\prime$ where $S^\prime$ is a finite generating subset for $N_1$. By \cite[3.B2]{Gromov}, there exists a $k_1 \in \N$ and a constant $C_1 > 0$ such that $\|x\|_{S} \leq C_1 \pr{\|x\|_{S'}}^{k_1}$ and $\|x\|_{S'} \leq C_1 \pr{\|x\|_{S}}^{k_1}$ for all $x \in N_1$. Take any subgroup $H \le N$ and any finite generating subset $T_1$ for $H$ with $\Vert t \Vert_S \leq \Vert H \Vert_S$ for all $t \in T_1$.
	
First suppose that $\pi(H) = 0$ or equivalently $H \le N_1$. Note that $N_1 \ge N_2 \ge \ldots \ge N_{h(N)}$ is a maximal central series for $N_1$. From the induction hypothesis, the proposition holds for the group $N_1$ with some constant $C_2 > 0$ and $k_2 \in \N$. So for all $i \geq 1$ we get
	$$\|H \cap N_i\|_S \leq C_1 \pr{ \|H \cap N_i\|_{S'}}^{k_1} \leq C_1 \: C_2^{k_1} \pr{\|H\|_{S'}}^{k_1 \: k_2} \leq C_1 \: C_1^{k_1 \: k_2} C_2^{k_1} \pr{\|H\|_S}^{k_1^2 \: k_2}. $$ On the other hand, for $i = 0$ it holds that $H \cap N_0 = H$, and thus, the statement of the proposition is evidently true. Hence, the proposition holds for subgroups $H \le N_1$.

	Thus, we may assume that $\pi(H) \neq 0$. By Lemma \ref{effBez2} with the generating subset $\pi(T_1)$ for $\pi(H)$, we find an element $t_0 \in H$ such that $\pi(t_0) \neq 0$ generates $\pi(H)$ with $\Vert t_0 \Vert_S \leq \Vert T_1 \Vert_S^2 \leq \pr{\|H\|_S}^{2}$. We now construct a finite generating subset for $H \cap N_1$. For each generator $t \in T_1$, let $x_t \in N_1$ be given by $x_t = t \: t_0^{- \frac{\pi(t)}{\pi(t_0)}}$. Note that these elements indeed lie in $N_1$, since $$\pi(x_t) = \pi(t) - \pi(t_0) \frac{\pi(t)}{\pi(t_0)} = 0.$$ 
	
Take a generating subset $T_2^\prime$ for $\gamma_2(H)$ such that $\Vert T_2^\prime \Vert_{T_1} \leq K_c$, which exists by Lemma \ref{normgamma2}.	We claim the set $T_2 = T_2^\prime \cup \set{x_t \: | \: t \in T_1}$ is a finite generating subset for $H \cap N_1$.  We only need to demonstrate that the image of the set $\set{x_t \: | \: t \in T}$ in $\faktor{H}{[H,H]}$ is a finite generating subset for $\faktor{H \cap N_1}{[H,H]}$, where $[H,H] \le N_1$ by definition. Let $T_1 = \left\{ t_1, \cdots, t_l \right\}$ and take $h \in H \cap N_1$. We may write $$h \equiv \: \prod_{i=1}^\ell t_{i}^{m_i}  \mod  [H,H]$$ for some integers $m_i$, and by construction, $\prod_{i=1}^\ell t_{i}^{m_i} \equiv h \equiv 0 \mod H \cap N_1$. Since $t_{i} \equiv t_0^{\pi(t_{i}) / \pi(t_0)} \mod N_1$, we have
	$$
	h \equiv \prod_{i=1}^\ell t_{i}^{m_i} \equiv t_0^{\sum_{i=1}^{\ell} m_i \: d_i \: \pi(t_0)} \mod N_1,
	$$
	where $d_i = \frac{\pi(t_{i})}{\pi(t_0)}$.
	In particular, $\sum_{i=1}^{\ell} m_i \: d_i = 0$. We may write
	$$
	\prod_{i=1}^\ell x_{t_{i}}^{m_i} \equiv \prod_{i=1}^{\ell}t_{i}^{{m_i}} t_0^{-{m_i \: d_i}} \equiv t_0^{^{-\sum_{i=1}^{\ell} m_i \: d_i}} \: \prod_{i=1}^\ell \: t_{i}^{m_i} \equiv h \mod [H,H].
	$$
	Thus, $$
	\prod_{i=1}^\ell x_{t_{i}}^{m_i} 
	\equiv h \mod [H,H].$$ Hence, $T_2$ is a finite generating subset for $H \cap N_1$.
	
	We now find a bound for $\|H \cap N_1\|_S$ by providing a bound for $\|T_2\|_S$ in terms of $\|H\|_S$. We first note that if $t \in T_2^\prime$, then $\|t\|_S \leq K_{c}  \|H\|_S$. The norm of each $x_t$ satisfies 
	$$
	\Vert x_t \Vert_S \leq |\pi(t)|  \| t_0 \|_S + \| t \|_S \leq \Vert H \Vert_S ^3 +  \Vert H \Vert_S \leq 2 \Vert H \Vert_S^3.
	$$ 
In particular, $\|H \cap N_1\|_S \leq C_3 \pr{\|H\|_S}^{3}$ for some constant $C_3 > 0$. Since we reduced the general case to the situation where $H \le N_1$, we are finished.
\end{proof}

We finish this section with some applications of Proposition \ref{norm_intersect_central}. Let $N$ be a $\mathcal{F}$-group with a maximal central series $\set{N_i}_{i=0}^{h(N)}$. Suppose $H \leq N$ is a subgroup with an element $g \in N \setminus H$. When $\pi_{N_{h(N)}}(g) \in \pi_{N_{h(N)}}(H)$, the following proposition constructs an element $z \in N_{h(N)} \setminus H$ such that $g = z \: h$ for some $h \in H$ and where there exists a bound on the word length of $z$ with respect to the word length of $g$. Once a bound for the word length of $z$ has been found, we may find a finite group that separates $g$ and $H$ by finding a finite group that separates $z$ and $H$ as will be seen at the end of this section.
\begin{prop}\label{length_of_g_mod_H}
	Let $N$ be a $\mathcal{F}$-group with a finite generating subset $S$, and let $\set{N_i}_{i=0}^{h(N)}$ be a maximal central series. There exists some constant $C > 0$ and $k \in \N$ such that for every subgroup $H \leq N$ and $g \in N \setminus H$ with $\pi_{N_{h(N)}}(g) \in \pi_{N_{h(N)}}(H)$, there exists a $z \in N_{h(N)} \setminus H$ such that $g \: z \in H$ and $$\|z\|_S \leq C \: \left( \max\{\|H\|_S,\|g\|_S\}\right)^{k}.$$
\end{prop}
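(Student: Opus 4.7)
The plan is to argue by induction on the Hirsch length $h(N)$, following the inductive scheme used in the proof of Proposition \ref{norm_intersect_central}. Throughout, write $M \bdef N_{h(N)}$ for the infinite cyclic central subgroup at the bottom of the maximal central series. The base case $h(N) = 1$ is immediate: then $N \cong \Z$, the hypothesis $\pi_M(g) \in \pi_M(H)$ is vacuous, and with $H = d\,\Z$ one takes $z \equiv -g \pmod{d}$ with $|z| \leq d/2 \leq \|H\|_S/2$, which plainly satisfies the desired bound.

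For the inductive step, assume the proposition holds for all $\mathcal{F}$-groups of Hirsch length strictly less than $h(N)$. As in the proof of Proposition \ref{norm_intersect_central}, replace $S$ by a generating subset of the form $S = \{s_0\} \cup S^\prime$, where $\pi_{N_1}(s_0)$ generates $N/N_1 \cong \Z$ and $S^\prime$ generates $N_1$; by \cite[3.B2]{Gromov} the $S$- and $S^\prime$-norms on $N_1$ are polynomially equivalent. Since $M \leq N_1$, the projection $\pi_{N_1}$ factors through $\pi_M$, so the hypothesis $\pi_M(g) \in \pi_M(H)$ forces $\pi_{N_1}(g) \in \pi_{N_1}(H)$. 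Pick a finite generating subset $T$ of $H$ with $\|t\|_S \leq \|H\|_S$ for every $t \in T$, and apply Lemma \ref{effBez2} to $\pi_{N_1}(T) \subset \Z$ to obtain an element $t_0 \in H$ with $\|t_0\|_S \leq \|H\|_S^2$ whose image generates $\pi_{N_1}(H)$. Writing $\pi_{N_1}(g) = k\,\pi_{N_1}(t_0)$ with $|k| \leq \|g\|_S$, set $h_1 \bdef t_0^k \in H$ and $g^\prime \bdef h_1^{-1} g \in N_1$; then $\|h_1\|_S$ and $\|g^\prime\|_S$ are both polynomial in $\max\{\|H\|_S, \|g\|_S\}$.

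Now set $H^\prime \bdef H \cap N_1$, a subgroup of the $\mathcal{F}$-group $N_1$ of Hirsch length $h(N) - 1$ whose inherited maximal central series $\{N_i\}_{i=1}^{h(N)}$ still has $M$ as its last nontrivial term; by Proposition \ref{norm_intersect_central}, $\|H^\prime\|_S$ is polynomially bounded in $\|H\|_S$. One checks that $g^\prime \in N_1 \setminus H^\prime$ (otherwise $g = h_1 g^\prime \in H$) and that $\pi_M(g^\prime) \in \pi_M(H^\prime)$: given any $h_2 \in H$ with $\pi_M(h_2) = \pi_M(g)$, the product $h_2 h_1^{-1}$ lies in $H \cap N_1 = H^\prime$ and realizes $\pi_M(g^\prime)$. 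The inductive hypothesis applied to $(N_1, H^\prime, g^\prime, S^\prime)$ then produces $z \in M \setminus H^\prime$ with $g^\prime z \in H^\prime$ and $\|z\|_{S^\prime}$ polynomially bounded in $\max\{\|H^\prime\|_{S^\prime}, \|g^\prime\|_{S^\prime}\}$. Since $gz = h_1 g^\prime z \in h_1 H^\prime \subseteq H$, and since $z \in M \leq N_1$ together with $z \notin H^\prime$ forces $z \notin H$, converting $\|z\|_{S^\prime}$ back to $\|z\|_S$ completes the induction. The main obstacle is the inductive step, where one must combine the effective B\'ezout input of Lemma \ref{effBez2}, Proposition \ref{norm_intersect_central}, and the polynomial-norm equivalence between $S$ and $S^\prime$ on $N_1$ so that all estimates compose into a single polynomial bound through the induction.
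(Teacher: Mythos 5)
Your proof is correct and follows essentially the same route as the paper's: induction on the Hirsch length, using Lemma \ref{effBez2} to produce a short $t_0 \in H$ generating $\pi_{N_1}(H)$, translating $g$ into $N_1$ by a power of $t_0$, and invoking Proposition \ref{norm_intersect_central} together with the polynomial norm equivalence between $S$ and $S'$ to keep all constants uniform in $H$ and $g$. The only cosmetic difference is that the paper splits into the cases $g \in N_1$ and $g \notin N_1$, whereas you treat them uniformly.
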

\begin{proof}
We proceed by induction on the Hirsch length, and observe that the base case is clear. We may assume that $S = S' \cup \set{s_0}$ where $s_0 \: N_1$ generates $\faktor{N}{N_{1}}$ and $S'$ generates $N_1$. Assume $h(N) > 1$, and let $T$ be a finite generating subset for $H$ such that $\|t\|_S \leq \|H\|_S$ for all $t \in T$. 

If $g \in N_1$, then the inductive hypothesis implies that there exists a $z \in N_{h(N)} \setminus H \cap N_1$ such that $g \: z \in H$ and $$\|z\|_{S'} \leq C_1 \pr{\max \{\|H \cap N_1\|_{S'},\|g\|_{S'}}^{k_1}$$ where $C_1 > 0$ is some constant and $k_1 \in \N$. \cite[3.B2]{Gromov} implies that there exists a constant $C_2 > 0$ and $k_2 \in \N$ such that $\|g\|_{S'} \leq C_2 \pr{\|g\|_{S}}^{k_2}$ (hence also $\|H\|_{S'} \leq C_2 \pr{\|H\|_{S}}^{k_2}$) and $\|g\|_{S} \leq C_2 \pr{\|g\|_{S'}}^{k_2}$ for all $g \in N_1$. Thus,  $$\|z\|_{S} \leq   C_1^{k_2} \: C_2^{k_1 \: k_2} C_2 \: \left(\max \{\|H \cap N_1\|_S,\|g\|_S\}\right)^{k_1 \: k_2^2}$$ just as in the proof of the previous proposition. Proposition \ref{norm_intersect_central} implies that there exists a constant $C_3 > 0$ and $k_3 \in \N$ such that $\|H \cap N_1\|_S \leq C_3 \: \pr{\|H\|}^{k_3}.$   Hence,
$$
\|z\|_{S} \leq  C_1^{k_2} \: C_2^{k_1 \: k_2} C_2  \: C_3^{k_1 \: k_2^2} \: \pr{\max\{\|H\|_S,\|g\|_S\}}^{k_1 \: k_2^2 \: k_3}.$$

Now suppose $g \notin N_{1}$. Since $\pi_{N_{h(N)}}(g) \in \pi_{N_{h(N)}}(H)$, we have $\pi_{N_1}(g) \in \pi_{N_1}(H)$. There exists a $d_1 \in \Z \setminus \set{0}$ such that $|d_1| \leq \Vert g \Vert_S$ and $g \equiv s_0^{d_1} \mod N_1$. Lemma \ref{effBez2} implies that there exists a $t_0 \in H$ such that $\langle\pi_{N_1}(t_0)\rangle \cong \pi_{N_1}(H)$ and $\Vert t_0 \Vert_S \leq (\|H\|_S)^2$. Given that $\pi_{N_1}(s_0^{d_1}) \in \pi_{N_1}(H)$, there exists a $k \in \Z$ such that $\pi_{N_1}(t_0^k) = \pi_{N_1}(g) = \pi_{N_1}(s_0^{d_1})$, and thus, $\vert k\vert \leq \vert d_1 \vert \leq n$. Letting $h = t_0^k$, we have $g \: h^{-1} \in N_1 \setminus H$, and since $$\|h^{-1}\|_S = \Vert h \Vert_S \leq \vert k \vert \Vert t_0 \Vert_S \leq n \Vert H \Vert_S^2 \leq \Vert g \Vert_S\Vert H \Vert_S^2,$$ we may proceed as in the previous case to find the general result.
\end{proof}

This next lemma and its proof can be found in \cite[Lemma 3.10]{dere_pengitore_17}. Moreover, this lemma will be useful in separating a central element from a subgroup in a $\mathcal{F}$-group.
\begin{lemma}\label{power_lemma}
Let $N$ be a $\mathcal{F}$-group of nilpotency class $c$, and let $p$ be a prime. There exists an integer $k(p,c) \geq 0$ such that if $x \in N^{p^{k + k(p,c)}}$, then there exists a $y \in N$ such that $x = y^{p^k}$. Additionally, $k(p,c)$ can be chosen so that $p^{k(p,c)} \leq c!$ for all primes $p$.
\end{lemma}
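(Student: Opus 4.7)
The plan is to prove the lemma by induction on the nilpotency class $c$ of $N$. For the base case $c = 1$, the group $N$ is torsion-free abelian, so $N^{p^k} = p^k N$ consists exactly of the $p^k$-th powers. Thus $k(p,1) = 0$ works and $p^0 = 1 \leq 1!$.

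For the inductive step, the main tool is the Hall-Petresco identity: in any group,
$$
(g_1 g_2)^n = g_1^n g_2^n \prod_{j \geq 2} c_j(g_1, g_2)^{\binom{n}{j}}, \qquad c_j \in \gamma_j(\langle g_1, g_2\rangle).
$$
In a nilpotent group of class $c$ the product terminates at $j = c$. Applied iteratively to a product $x = g_1^{p^m} g_2^{p^m} \cdots g_r^{p^m} \in N^{p^m}$, this rewrites $x = (g_1 \cdots g_r)^{p^m} \cdot w$, where $w \in \gamma_2(N)$ is a product of iterated commutators raised to exponents of the form $\binom{p^m}{j}$ for $2 \leq j \leq c$. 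Using the elementary identity $\nu_p\!\left(\binom{p^m}{j}\right) = m - \nu_p(j!) \geq m - \nu_p(c!)$, every such exponent is divisible by $p^{m - \nu_p(c!)}$, and therefore $w \in \gamma_2(N)^{p^{m - \nu_p(c!)}}$. Since $\gamma_2(N)$ is a $\mathcal{F}$-group of nilpotency class at most $c-1$, the inductive hypothesis (with $k(p,c)$ chosen so that $m - \nu_p(c!) \geq k + k(p, c-1)$) provides $z \in \gamma_2(N)$ with $w = z^{p^k}$.

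The remaining task, and the main technical obstacle, is to fuse the two non-commuting $p^k$-th powers $(g^{p^{m-k}})^{p^k}$ (where $g = g_1 \cdots g_r$) and $z^{p^k}$ into a single $p^k$-th power. The strategy is to seek $y = g^{p^{m-k}} z'$ with $z' \in \gamma_2(N)$ to be determined, apply Hall-Petresco to $y^{p^k}$, and exploit the fact that $z' \in \gamma_2(N)$ forces the resulting correction commutators to lie in $\gamma_3(N)$; this pushes the obstruction one step deeper into the lower central series. Iterating the descent, after at most $c - 1$ steps the obstruction lands in $\gamma_{c+1}(N) = \{1\}$ and vanishes. The constant $k(p, c)$ is built up inductively to absorb the cumulative $p$-adic precision losses at each descent, and the bound $p^{k(p, c)} \leq c!$ is then extracted from the classical estimate $p^{\nu_p(c!)} \leq c!$ combined with the inductive bound $p^{k(p, c-1)} \leq (c-1)!$. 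The delicate point, and the real heart of the proof, is to verify that the cumulative loss across all descent steps stays within $\log_p(c!)$; this is best handled by passing to Mal'cev coordinates and exploiting the polynomial form of the power map, so that the $p$-adic divisibility conditions imposed by $x \in N^{p^m}$ can be solved coordinate by coordinate for a single $p^k$-th root.
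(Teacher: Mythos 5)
Your proposal correctly identifies the natural toolkit (induction on the nilpotency class, the Hall--Petresco identity, and the $p$-adic valuation of $\binom{p^m}{j}$), but it is not a proof: the step you yourself call ``the main technical obstacle'' --- rewriting the product $(g^{p^{m-k}})^{p^k}\, z^{p^k}$ with $z \in \gamma_2(N)$ as a single $p^k$-th power --- is only described as a strategy and never carried out, and this is precisely where the content of the lemma lies. In the descent you sketch, each replacement $y \mapsto y\,z'$ produces new Hall--Petresco corrections whose exponents $\binom{p^k}{j}$ are divisible only by $p^{k-\nu_p(j)}$, so precision is lost at every level, \emph{and} absorbing the obstruction sitting in $\gamma_i(N)$ into a single power requires invoking the inductive hypothesis for $\gamma_i(N)$, which costs a further $k(p,\,c-i+1)$. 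Summed over the $c-1$ levels of the lower central series, the naive accumulation of these losses exceeds $\nu_p(c!)$, so the asserted bound $p^{k(p,c)} \le c!$ does not follow from what you wrote; you effectively concede this by deferring ``the real heart of the proof'' to an argument in Mal'cev coordinates that you do not give. A careful choice of where the losses are charged (the total $\sum_{j=2}^{c}\nu_p(j) = \nu_p(c!)$ is suggestive that it can be made to work) is exactly the bookkeeping that is missing.

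Two smaller points. First, the ``elementary identity'' $\nu_p\left(\binom{p^m}{j}\right) = m - \nu_p(j!)$ is false as stated (take $p=2$, $m=2$, $j=4$); the correct identity is $\nu_p\left(\binom{p^m}{j}\right) = m - \nu_p(j)$, and only the inequality $\ge m - \nu_p(c!)$ is needed, which is true. Second, be aware that the paper does not reprove this lemma at all: it simply cites \cite[Lemma 3.10]{dere_pengitore_17}, where the complete argument --- including the fusion step and the verification of the bound $p^{k(p,c)} \le c!$ that your sketch omits --- can be found.
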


This last proposition gives the effective behavior of separating a central element $z$ from any subgroup. See \cite[Proposition 5.3]{dere_pengitore_17} for a similar statement in the context of central subgroups. In this proposition, the map $\pi_{p^k}: N \to \faktor{N}{N^{p^k}}$ is the natural projection map, where $N^{p^k}$ is the (normal) subgroup generated by all the $p^k$-powers of elements in $N$, see page \pageref{mappik}.

\begin{prop}\label{central_element_sep_subgroup}
	Let $N$ be a $\mathcal{F}$-group with a finite generating subset $S$. There exists a constant $C > 0$ and a $m \in \N$ such that for all subgroups $H \leq N$ and every $x \in Z(N) \setminus H$, there exists a prime power $p^k$ such that $\pi_{p^k}(x) \notin \pi_{p^k}(H)$ where
	$$
	p^k \leq C (\|H\|_S)^m \: \log(C \|x\|_S).
	$$
\end{prop}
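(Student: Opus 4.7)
The strategy is to reduce the central separation problem in $N$ to an elementary computation in an infinite cyclic central subgroup, and then extract a small prime power via the prime number theorem on one side and an order-estimate from Proposition \ref{norm_intersect_central} on the other. I proceed by induction on the Hirsch length $h(N)$, fixing a maximal central series $\{N_i\}_{i=0}^{h(N)}$ throughout.

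For the inductive step, let $j$ be the largest index with $x \in HN_j$; this $j$ exists and satisfies $0 \leq j \leq h(N)-1$ with $x \notin HN_{j+1}$. If $j+1 < h(N)$, pass to $\bar{N} := N/N_{j+1}$, which has Hirsch length strictly less than $h(N)$. Because $N_{j+1}$ is central, the image $\bar{x}$ lies in $Z(\bar{N})$ and is not in $\bar{H}$ by the maximality of $j$. The inductive hypothesis (using $\|\bar{H}\|_{\bar{S}} \leq \|H\|_S$ and $\|\bar{x}\|_{\bar{S}} \leq \|x\|_S$ with $\bar{S}$ the image of $S$) produces a prime power $p^k$ separating $\bar{x}$ from $\bar{H}$ in $\bar{N}$; lifting gives $x \notin HN^{p^k}N_{j+1} \supseteq HN^{p^k}$. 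If instead $j = h(N)-1$, then $x \in HN_{h(N)-1}$, and Proposition \ref{length_of_g_mod_H} furnishes $z \in N_{h(N)-1} \setminus H$ with $xz \in H$ and $\|z\|_S \leq C(\max\{\|H\|_S, \|x\|_S\})^\kappa$ for constants $C, \kappa$ depending only on $N$. Since $xz \in H$, separating $x$ from $H$ in any quotient is equivalent to separating $z$ from $H$; this reduces us to the case $x \in N_{h(N)-1} \cong \Z$, with norm polynomial in the original data.

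It remains to handle the base case together with this reduced case, namely $x \in N_{h(N)-1} \setminus H$ with $N_{h(N)-1}$ infinite cyclic and central. Write $H \cap N_{h(N)-1} = \langle z_0^d \rangle$ with $z_0$ a generator and $d \geq 0$; by Proposition \ref{norm_intersect_central}, $d$ is bounded by a polynomial in $\|H\|_S$. Writing $x = z_0^r$, we have $d \nmid r$ (else $x \in H$). The aim is to find $p^k$ so that $\gcd(d, p^{k-k(p,c(N))}) \nmid r$, after which the argument of the next paragraph converts this to $x \notin HN^{p^k}$. If $d = 0$, the smallest prime $p$ with $p \nmid r$ is $O(\log|r|) = O(\log\|x\|_S)$ by the prime number theorem, so $p^k = p$ suffices. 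If $d > 0$, pick any prime factor $p$ of $d/\gcd(d,r)$ and set $k = \nu_p(d) + k(p, c(N))$; then $\gcd(d, p^{k-k(p,c(N))}) = p^{\nu_p(d)}$ does not divide $r$, and $p^k \leq c(N)! \cdot d$, since $p^{k(p,c(N))} \leq c(N)!$ by Lemma \ref{power_lemma}. Combining the two cases and tracking constants from the reduction yields the claimed bound $p^k \leq C(\|H\|_S)^m \log(C\|x\|_S)$.

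The main obstacle is converting the abelian condition ``$\gcd(d, p^{k'}) \nmid r$'' inside $N_{h(N)-1}$ into the statement $x \notin HN^{p^k}$ in $N$: a priori, $HN^{p^k} \cap N_{h(N)-1}$ could strictly exceed $(H \cap N_{h(N)-1}) \cdot (N^{p^k} \cap N_{h(N)-1})$, since noncentral elements of $H$ and of $N^{p^k}$ may multiply to a central element. Lemma \ref{power_lemma} resolves this: any element of $N^{p^{k'+k(p,c(N))}}$ is a genuine $p^{k'}$-th power in $N$, and in a torsion-free nilpotent group the $p^{k'}$-th root of an element of $N_{h(N)-1} \subseteq Z(N)$ must itself be central by uniqueness of roots. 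Combined with a decomposition of any $h \in H$ satisfying $x = hn$ modulo $N_{h(N)-1}$, this confines the possible ``extra'' contribution to $p^{k-k(p,c(N))} N_{h(N)-1}$, bringing us into the range where the abelian analysis applies and accounting for the shift by $k(p,c(N))$ absorbed into the final constants.
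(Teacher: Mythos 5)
Your outer skeleton (induction on Hirsch length, quotienting by $N_{j+1}$, and using Proposition \ref{length_of_g_mod_H} to push $x$ into the last infinite cyclic central term) is sound, but the argument breaks at exactly the point you flag as the ``main obstacle,'' and your proposed resolution does not close it. The condition $\gcd(d,p^{k'})\nmid r$, with $d$ the index of $H\cap N_{h(N)-1}$ in $N_{h(N)-1}$, is genuinely insufficient to conclude $x\notin HN^{p^k}$. Concretely, take $N = H_3(\Z)$, $H = \langle a^{p^{10}}\,c\rangle$ and $x = c^r$ with $p\nmid r$. Here $H\cap Z(N) = \{1\}$, so $d=0$ and your recipe asks for the smallest prime $q$ with $q\nmid r$; if $r$ is the product of all primes below $p$, that prime is $q=p$ and you would claim $c^r\notin HN^{p^{1+k(p,2)}}$. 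But $c^r = (a^{p^{10}}c)^r\cdot a^{-p^{10}r}\in H\,N^{p^j}$ for every $j\leq 10$. Tracing your Lemma \ref{power_lemma} argument through this example shows why it is vacuous: it writes $h = (a^{p^{10}}c)^r = y^{p^{k'}}w$ with $w = c^r$ central, so the ``confined extra contribution'' $xw^{-1}$ is trivial, and $w$ lies neither in $H$ nor in $(H\cap Z(N))(N^{p^{k'}}\cap Z(N))$. In general the central correction $w$ produced by your root-extraction need not belong to $H$, so the arithmetic in $\langle z_0\rangle$ cannot be closed using $d$ alone: the obstruction is governed not by $[N_{h(N)-1}:H\cap N_{h(N)-1}]$ but by the $p$-adic valuations of the images of generators of $H$ in the successive quotients $N_i/N_{i+1}$. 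Note also that your $d=0$ sub-case produces a bound independent of $\|H\|_S$, which the example rules out for the chosen prime.

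This is precisely what the paper's proof is engineered around. It fixes the central element and inducts instead on the subgroups $H_i = H\cap N_i$, passing from $H_{i+1}$ to $H_i$ by adjoining a \emph{single} generator $h_i$ and raising the exponent by $\ell_0 = k(p,c(N)) + \nu_p(d)$, where $h_i\equiv g_i^{d}\bmod N_{i+1}$. The one-generator-at-a-time structure makes the cross term a power $h_i^j$ of a single element, so Lemma \ref{power_lemma} applied in $N/N_{i+1}$ forces $p^{\ell}\mid j$, pushes $h_i^j$ into $N^{p^{\ell}}$, and reduces to the previous stage; the valuations $\nu_p(d)$ are then bounded polynomially in $\|H\|_S$ via Proposition \ref{norm_intersect_central} and Lemma \ref{effBez2}. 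In the example above this increment contributes exactly the factor $p^{10}\approx\|H\|_S$ that your bound omits. To repair your proof you would need to replace the single invariant $d$ by this generator-by-generator bookkeeping, at which point you have essentially reconstructed the paper's argument.
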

\begin{proof}
Let $H$ be a subgroup and $z \in Z(N) \setminus H$. Let $\set{N_i}_{i=0}^{h(N)}$ be maximal central series where there exits $i_0$ with $N_i \leq Z(N)$ for $i \geq i_0$, $Z(N) \leq N_i$ for $i \leq i_0$, and subsequently, $N_{i_0} = Z(N)$. Note that such a maximal central series always exists. Taking $H_i = H \cap N_i$, we will first prove the statement for the subgroup $H_{i_0}$ and then show that if the statement holds for the subgroup $H_{i+1}$ with $i + 1 \leq i_0$, then it also holds for the subgroup $H_i$. The proposition then follows by applying Proposition \ref{norm_intersect_central}.

First take $H = H_{i_0}$ which implies that $H \le Z(N)$. If $z \notin \sqrt[N]{H}$, then the statement follows from separating the image of $z$ from the identity in the group $\faktor{N}{\sqrt[N]{H}}$, see \cite{Bou_Rabee10}. Thus, we may assume that $z \in \sqrt[N]{H}$. Take a primitive element $s \in Z(N)$ such that $z = s^{d_1}$ and $H \cap \langle s \rangle = \langle s^{d_2} \rangle$ for $d_1, d_2 \in \Z$. There exists some constants $C_1 > 0, \hspace{0.5mm} m_1 \in \N$ such that $d_2 \leq C_1 \pr{\|H\|_S}^{m_1}$ from \cite[Proposition 4.2.]{dere_pengitore_17}. Since $z \notin H$, there exists a prime power such that $p^k \nmid d_1$ but $p^k \mid d_2$ (and hence $p^k \leq \vert d_2 \vert$). Considering the subgroup $N^{p^{k+ k(p,c)}}$, Lemma \ref{power_lemma} implies that $N^{p^{k + k(p,c)}} \cap Z(N) \le p^k Z(N)$. That implies that $z$ is separated from $H$ in the quotient $\faktor{N}{N^{p^{k+k(p,c)}}}$, from which the statement follows. 

Suppose now the statement holds for $H_{i+1}$ with $i+1 \leq i_0$ or equivalently $Z(N) \le N_{i+1}$, we then show it holds for the subgroup $H_i$. By assumption, $z \in N_{i+1}$. We know that there exists a constant $C_2 > 0$ and a $m_2 \in \N$ such that there exists a prime power $p^\ell$ such that $\pi_{p^{\ell}}(x) \notin \pi_{p^{\ell}}(H_{i+1})$ and where$$
	p^{\ell} \leq C_2 (\|H\|_S)^{m_2} \log(C_2 \: \|x\|_S).
	$$
	If $H_i = H_{i+1}$, then there is nothing to show. Thus, we may assume that there exists a $h_{i} \in N$ such that $\innp{h_{i},H_{i+1}} = H_{i}$. Letting $g_{i}$ be a generator for the quotient $\faktor{N_i}{N_{i+1}}$, there exists a $d \in \Z$ such that $h_{i} \equiv g_{i}^{d}  \mod N_i$. 

	If $\pi_{p^\ell}(z) \notin \pi_{p^\ell}(H_{i})$, we are done. Otherwise, we have $z \equiv h \mod N^{p^{\ell}}$ for some $h \in H_{i}$. We may write $h = h_{i}^j \: h_{i+1}$ where $h_{i+1} \in H_{i+1}$. In particular, $z \equiv g_{i+1}^{jd} \mod N_i$. Letting $$\ell_0 = k(p,c(N)) + \nu_p(d),$$ we claim that $\pi_{p^{\ell + \ell_0}}(z) \notin \pi_{p^{\ell + \ell_0}}(H_{i+1})$.
	
	 For a contradiction, assume otherwise. We have $h_{i}^{j} \: h_{i+1} \: \in z \: N^{p^{\ell + \ell_0}}$. That implies $$h_{i}^j \: N_{i+1} \in  N^{p^{\ell + \ell_0}} N_{i+1} \le \langle g_i^{p^{l+ \nu_p(d)}} \rangle N_{i+1}.$$ We note that $h_{i}^j \equiv  g_{i}^{jd} \mod N_{i+1}$, and thus, $$
	 g_{i}^{jd} \in \langle g_i^{p^{l+ \nu_p(d)}} \rangle \mod N_{i+1}.
	 $$
	 In particular, $p^{\ell} \mid j$, and hence, $h_{i}^j \in N^{p^{\ell}}$. We get that 
	$$
	 z = h_{i}^j \: h_{i+1} N^{p^{\ell}} \in H_{i+1} \: N^{p^{\ell}} 
	$$ which is a contradiction.

	To provide a bound for $p^{\ell + \ell_0}$, we note that we have a bound for $p^\ell$ by induction and that Lemma \ref{power_lemma} implies $p^{k(p,c(N))} \leq c(N)!$. Thus, we need a bound for $d$. Proposition \ref{norm_intersect_central} implies that $\|H_{i}\|_S \leq C_3 \pr{\|H\|_S}^{k_3}$ for some constant $C_3 > 0$ and integer $k_3 \in \N$. In particular, there exists a finite generating subset $\set{a_t}_t$ for $H_{i}$ such that $\|a_t\|_S \leq C_3 \pr{\|H\|_S}^{k_3}$ for all $t$. Lemma \ref{effBez2}
	implies with the generating subset $\set{\pi_{N_{i+1}}(a_t)}_t$ for  $\faktor{H_{i}}{H_{i+1}}$ that there exists a $b \in H_{i}$ such that $\|b\|_S \leq C_3^2 \pr{\|H\|_S}^{2 \:k_3}$ and where $\pi_{N_{i+1}}(b)$ generates $\faktor{H_{i}}{N_{i+1}}$. Take $S^\prime$ a generating subset for $N_{i}$ such that one generator projects to a generator for $\faktor{N_i}{N_{i+1}}$ and the others generate $N_{i+1}$. There exist constants $C_4, k_4 > 0$ such that $\Vert x \Vert_{S^\prime} \leq C_4 (\Vert x \Vert_S)^{k_4}$ for all $x \in N_{i}$ and so $\|b\|_{S^\prime} \leq C_3^{2 \: k_4} C_4 \pr{\|H\|_S}^{2 \:k_3 \: k_4}$. In particular $|d| \leq C_3^{2 \: k_4} \: C_4 \pr{\|H\|_S}^{2 \:k_3 \: k_4}$. Therefore,
	$$
	p^{\ell + \ell_0} \leq C_5 (\|H\|_S)^{m_2 + 2 \:k_3 \: k_4} \log(C_5\: \|x\|_S)
	$$
	for some constant $C_5 > 0$ as desired.
\end{proof}

\section{Proof of the main results}

\paragraph{Effective Separability of Subgroups}$
$\newline We first restate the first result.

\begin{repthm}{effective_separation_subgroup_thm}
	Let $N$ be a torsion-free, finitely generated nilpotent group with a finite generating subset $S$, and suppose $H \leq N$ is a subgroup of infinite index. Then for $h(N)$ the Hirsch length of $N$, it holds that $$\log(n) \preceq \Farb_{N,H,S}(n) \preceq \pr{\log(n)}^{h(N)}.$$ Moreover, if $H \nsub N$, there exists a constant $\psi(N,H) \in \N$ such that $$\log(n) \preceq \Farb_{N,H,S}(n) \preceq \pr{\log(n)}^{\psi(N,H)}.$$
\end{repthm}

We are now ready to give the proof of the first statement of the above result.
 \begin{proof}[Proof of first part of Theorem \ref{effective_separation_subgroup_thm}] We proceed by induction on Hirsch length of $N$, and since the base case is clear, we may assume that $h(N) > 1$. Let $\set{N_i}_{i=0}^{h(N)}$ be a maximal central series of $N$, and let $g \in N$ satisfy $g \notin H$ and $\|g\|_S \leq n$.
	
	 First suppose that $\pi_{N_{h(N)}}(g) \notin \pi_{N_{h(N)}}(H)$. Induction implies that there exists a surjective group morphism $\map{\pi}{\faktor{N}{N_{h(N)}}}{Q}$ to a finite group $Q$ such that $\pi(g) \notin \pi(\pi_{N_{h(N)}}(H))$. Moreover, there exists a constant $C_1 > 0$ such that 
	$
	|Q| \leq C_1 \pr{\log (C_1 n)}^{h(N)-1}.
	$ By composing $\pi$ with the natural projection on $\faktor{N}{N_{h(N)}}$, we are able to separate $g$ and $H$.
	
	Thus, we may assume that $g \in H N_{h(N)}$. Proposition \ref{length_of_g_mod_H} implies that there exists a $z \in N_{h(N)} \setminus H$ such that $g = h \: z$ where $h \in H$, and moreover, since $H$ is fixed, there exists a constant $C_2> 0$ and a $k_2 \in \N$ such that $\|z\|_S \leq C_2 \pr{\|g\|_S}^{k_2}$. Proposition \ref{central_element_sep_subgroup} implies that there exists a prime power $p^m$ such that $\pi_{p^m}(z) \notin \pi_{p^m}(H)$, and because $H$ is fixed, we get $
	p^m \leq C_3 \: \log(C_3 \:\|z\|_S )$
	for some constant $C_3 > 0$. We claim that $\pi_{p^m}(g) \notin \pi_{p^m}(H)$. For a contradiction, suppose otherwise. We then have $$\pi_{p^m}(g) = \pi_{p^m}(h \: z) \in \pi_{p^m}(H).$$ Since $h\in H$, we must have that $\pi_{p^m}(z) \in \pi_{p^m}(H)$ which is a contradiction. Thus, $$\D_N(H,g) \leq C_4 \: \pr{\log(C_4 \: n)}^{h(N)}$$ for some constant $C_4 > 0$, and subsequently, $$\Farb_{N,H,S}(n) \preceq \pr{\log(n)}^{ h(N)}.$$
	
	For the lower bound of $\Farb_{H,N,S}(n)$, let $\pi = \pi_{\sqrt[N]{[N,N]}}$.  Since $H$ is not a finite index subgroup of $N$, $\pi(H)$ is not a finite index subgroup of $\faktor{N}{\sqrt[N]{[N,N]}}$. Thus, there exists an element $g \in N$ such that $\pi(g^k) \notin \pi(H)$ for all $k > 0$. By changing the generating set of $N$ if necessary, we may assume that $\Vert g^k \Vert = k$. Letting $\set{p_i}$ be an enumeration of the primes, we let $k_i = \lcm\set{p_1,\cdots, p_{i -1}}$, and we define $g_i =g^{k_i}$. We then have that \cite[Cor 10.1.]{Tenenbaum} implies that $\log(k_i) \approx p_i$, and hence, we also have that $$ \log\left(\Vert \pi(g_i) \Vert_{\pi(S)}\right) = \log \left( k_i \right) \approx p_i.$$ It suffices to show that if $\map{\pi_2}{N}{Q}$ is a surjective group morphism to a finite group where $|Q| < p_i$, then $\pi_2(g_i) \in \pi_2(H)$. Since $|Q| < p_i$, it follows $\Ord_Q(\pi_2(g)) < p_i$. Thus, $\Ord_Q(\pi_2(g)) \mid k_i$ and hence, $\pi_2(g_i) = \pi_2(g^{k_i}) = 1$. Subsequently, $\pi_2(g_i) \in \pi_2(H)$, and therefore, $\D_N(H,g_i) \geq p_i$. Hence, $$\log(n) \preceq \Farb_{N,H,S}(n) \qedhere.$$
\end{proof}

For the second part of Theorem \ref{effective_separation_subgroup_thm}, we have to relate the complexity of separating $H$ in $N$ is to the complexity of residual finiteness in the quotient group $\faktor{N}{H}$ for a normal subgroup $H$ of $N$. The following lemma gives such a result for general finitely generated groups $G$.

\begin{lemma}\label{subsep_rf_quotient}
	Let $G$ be a finitely generated group with a normal finitely generated subgroup $H \nsub G$. The subgroup $H$ is a separable subgroup of $G$ if and only if the group $\faktor{G}{H}$ is residually finite. Moreover, if $S$ and $T$ are finite generating subsets of $G$ and $\faktor{G}{H}$, respectively, then $$\Farb_{G,H,S}(n) \approx \Farb_{\faktor{G}{H},\{1\},T}(n).$$
\end{lemma}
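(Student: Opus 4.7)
\textbf{Proof proposal for Lemma \ref{subsep_rf_quotient}.} My plan is to establish the two asymptotic inequalities $\Farb_{G,H,S}(n) \preceq \Farb_{G/H,\{1\},T}(n)$ and $\Farb_{G/H,\{1\},T}(n) \preceq \Farb_{G,H,S}(n)$ separately; the qualitative equivalence (separability of $H$ $\Leftrightarrow$ residual finiteness of $G/H$) then drops out as a byproduct. To remove any dependence on the choice of generating subset, I will first fix $\bar{S} = \pi_H(S)$ as a generating subset of $G/H$, prove $\Farb_{G,H,S}(n) = \Farb_{G/H,\{1\},\bar S}(n)$ on the nose for this particular choice, and then invoke the standard change-of-generators argument (analogous to Lemma \ref{subgroup_norm_lemma} applied to the trivial subgroup) to pass from $\bar S$ to any given generating subset $T$ of $G/H$, which only introduces multiplicative constants absorbed by the $\approx$ relation.

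For the inequality $\Farb_{G,H,S}(n) \leq \Farb_{G/H,\{1\},\bar S}(n)$, take any $g \in G \setminus H$ with $\|g\|_S \leq n$. Its image $\pi_H(g)$ is a nontrivial element of $G/H$ with $\|\pi_H(g)\|_{\bar S} \leq n$, so residual finiteness of $G/H$ provides a surjective morphism $\map{\bar \pi}{G/H}{\bar Q}$ to a finite group of order at most $\Farb_{G/H,\{1\},\bar S}(n)$ with $\bar\pi(\pi_H(g)) \neq 1$. The composition $\bar\pi \circ \pi_H \co G \to \bar Q$ then sends $g$ outside the image of $H$ (which maps to $1$), yielding the required bound on $\D_G(H,g)$.

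The reverse inequality is the step I expect to require the key idea. Given $g \in G \setminus H$ with $\|g\|_S \leq n$ and a surjection $\map{\pi}{G}{Q}$ to a finite group with $\pi(g) \notin \pi(H)$ and $|Q| \leq \Farb_{G,H,S}(n)$, the map $\pi$ need not factor through $G/H$, so I cannot use it directly to separate $\pi_H(g)$ from $1$. The trick is to replace $\ker(\pi)$ by the subgroup $N = H \cdot \ker(\pi)$. Since $H \nsub G$ and $\ker(\pi) \nsub G$, the product $N$ is a normal subgroup of $G$; it contains $H$ by construction; and its index satisfies $[G:N] = [Q : \pi(H)] \leq |Q|$. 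Crucially, $g \notin N$: if $g = hk$ with $h \in H$ and $k \in \ker(\pi)$, then $\pi(g) = \pi(h) \in \pi(H)$, contradicting the hypothesis. Thus the induced surjection $G/H \twoheadrightarrow G/N$ sends $\pi_H(g)$ to a nontrivial element, giving $\D_{G/H}(\{1\},\pi_H(g)) \leq [G:N] \leq |Q| \leq \Farb_{G,H,S}(n)$. Every nontrivial element of $G/H$ of length $\leq n$ lifts to some $g \in G \setminus H$ of length $\leq n$, so this yields the desired inequality.

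Combining both inequalities gives $\Farb_{G,H,S}(n) = \Farb_{G/H,\{1\},\bar S}(n)$, and applying the generating-set-change lemma for $G/H$ to pass from $\bar S$ to $T$ completes the proof of the asymptotic equivalence. The qualitative statement follows: if $G/H$ is residually finite, the argument for the first inequality shows each $g \in G \setminus H$ can be separated from $H$; conversely, if $H$ is separable, the $N = H \cdot \ker(\pi)$ construction produces, for each nontrivial $\pi_H(g)$, a finite quotient of $G/H$ in which $\pi_H(g)$ survives.
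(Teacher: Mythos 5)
Your proposal is correct and follows essentially the same route as the paper: both directions hinge on the observation that, because $H$ is normal, one can pass to a further quotient in which $H$ dies while $g$ survives. The only cosmetic difference is that you realize this quotient as $G/(H\cdot\ker\pi)$ whereas the paper works with $\faktor{Q}{\pi(H)}$ (using that $\pi(H)$ is normal in $Q$ for surjective $\pi$) --- these are canonically isomorphic, and your handling of generating subsets via $\bar S = \pi_H(S)$ and lifting words matches the paper's as well.
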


\begin{proof}For every $g \in G$, we will denote by $\pi_H\pr{g}$ as the natural projection of $g$ in $\faktor{G}{H}$. We demonstrate that $\D_{\faktor{G}{H}} \left(\set{1},\pi_H(g)\right) = \D_G(H,g)$ for all $g \notin H$, which implies the first statement. Note that if $\map{\pi}{\faktor{G}{H}}{Q}$ is a morphism such that $\pi(\pi_H(g)) \neq 1$, then $\pi \circ \pi_H : G \to Q$ is by definition a morphism which separates $H$ and $g$. Thus, $$\D_G(H,g) \leq \D_{\faktor{G}{H}} \left(\set{1},\pi_H(g)\right).$$
	
For the other inequality, assume that $\map{\pi}{G}{Q}$ is a surjective group morphism such that $|Q| =\D_G(H,g)$ and where $\pi(g) \notin \pi(H)$. Since $\pi(H)$ is a normal subgroup in $Q$, we can consider the quotient $\faktor{G}{\pi(H)}$, and by assumption, we have that $\pi_{\pi(H)}(\pi(g)) \neq 1$. 
By considering the induced group morphism $\tilde{\pi}: \faktor{G}{H} \to \faktor{Q}{\pi(H)}$, we note that this group morphism separates $\{1\}$ and $\pi_H(g)$ by construction. Therefore, $$\D_{\faktor{G}{H}}\left(\set{1},\pi_H(g)\right) \leq \D_G(H,g)$$ which gives our claim. 

The final statement of the lemma does not depend on the finite generating subsets, so we take $S$ any symmetric generating subset of $G$ and $T= \pi_H(S)$ to be a generating subset of $\faktor{G}{H}$. Suppose $g \in \faktor{G}{H}$ such that $\|g\|_S \leq n$. Since $\|\pi_H(g)\|_{T} \leq n$, we have $$\D_G(H,g) = \D_{\faktor{G}{H}}(\set{1},\pi_H(g)) \leq \Farb_{\faktor{G}{H},\set{1},T}(n).$$ Thus, $$\Farb_{G,H,S}(n) \leq \Farb_{\faktor{G}{H},\set{1},T}(n).$$

Now suppose $\pi_H(g_n) \in \faktor{G}{H}$ such that $\pi_H(g_n) \neq 1$ and where $\D_{\faktor{G}{H}} \left(\set{1},\pi_H(g)\right) = \Farb_{\faktor{G}{H},\set{1},T}(n)$. We may write
	$\pi_H(g_n) = \prod_{i=1}^{n}\pi_H(s_{i})$ where $s_i \in S$. That implies if we set $\tilde{g}_n = \prod_{i=1}^n s_i$, then $\|\tilde{g}_n\|_S \leq n$ and $\pi_H(\tilde{g}_n) = \pi_H(g_n)$. By the above claim, $\D_{G}(H,\tilde{g}_n) = \D_{\faktor{G}{H}}(\set{1},\pi_H(g_n))$, and thus, $\D_{G}(H,\tilde{g}_n) \leq \Farb_{G,H,S}(n)$. Hence, $$\Farb_{\faktor{G}{H},\set{1},S}(n) \leq \Farb_{G,H,S}(n),$$ and subsequently, $$\Farb_{G,H,S}(n) \approx \Farb_{\faktor{G}{H},\set{1},T}(n). \qedhere$$
\end{proof}

We now finish this subsection with the proof of the second statement of Theorem \ref{effective_separation_subgroup_thm}.
 \begin{proof}[Proof of second part of Theorem \ref{effective_separation_subgroup_thm}]
The statement is immediate, since if $H$ is a normal subgroup, Proposition \ref{subsep_rf_quotient} implies that $\Farb_{N,H,S}(n) \approx \Farb_{\faktor{N}{H},\set{1},T}(n)$ with $T$ any generating subset for $\faktor{N}{H}$. Since $\faktor{N}{H}$ is infinite, both \cite[Theorem 2.2]{Bou_Rabee10} and \cite[Theorem 1.3]{corrigendum_pengitore} imply the statement. \end{proof}

\paragraph{Effective Subgroup Separability}
Next we give the proof of our second main result. 
\begin{repthm}{effective_subgroup_separability_thm}
	Let $N$ be a torsion-free, finitely generated nilpotent group with a finite generating subset $S$. There exists a $k \in \N$ such that $$n \preceq \Sub_{N,S}(n) \preceq n^{k}.$$
\end{repthm}

\begin{proof}
We start with the upper bound for $\Sub_{N,S}(n)$, which is similar to the proof of the upper bound in Theorem \ref{effective_separation_subgroup_thm}. Let $\set{N_i}_{i=0}^{h(N)}$ be a maximal central series of $N$. Let $H\leq N$ be a subgroup where $\|H\|_S \leq n$, and let $g \in N \setminus H$ such that $\|g\|_S \leq n$. If $\pi_{N_{h(N)}}(g) \notin \pi_{N_{h(N)}}(H)$, then by induction there exists a constant $C_1 > 0$ and $k_1 \in \N$ such that $\D_{N}(H,g) \leq C_1 \: n^{k_1}$. Otherwise, Proposition \ref{length_of_g_mod_H} implies that there exists a $z \in N_{h(N)} \setminus  H$ and $h \in H$  such that $g = z \: h$. Moreover, there exists some constant $C_2 > 0$ and $k_2 \in \N$ such that 
$$
\|z\|_S \leq C_2 \: \pr{\text{max}\set{\|H\|_S,\|g\|_S}}^{k_2} \leq C_2 \: n^{k_2}.
$$

Proposition \ref{central_element_sep_subgroup} implies that there exists a prime power $p^k$ such that $\pi_{p^m}(z) \notin \pi_{p^m}(H)$ and where $$
	p^m \leq C_3 \: \|H\|_S^{k_3} \log(C_2 \: \|z\|_S).
	$$ 
	for some constant $C_3 > 0$ and integer $k_3$. Hence also $\pi_{p^m}(g) \notin \pi_{p^m}(H)$, since otherwise $\pi_{p^m}(z)  = \pi_{p^m}(g \: h^{-1}) \in \pi_{p^m}(H)$. Thus, there exist constants $C_4, k_4 > 0$ such that $\D_{N}(H,g) \leq C_4 \: n^{ k_4}$. Subsequently,
	$$
	\Sub_{N,S}(n) \preceq n^{k_4}.
	$$
	
	We now construct the lower bound for $\Sub_{N,S}(n)$ when $N$ is a finitely generated nilpotent group. Letting $M = \sqrt[N]{[N,N]}$, there exists a $g \in N$ such that $\pi_M(g)$ is a primitive non-trivial element of $\faktor{N}{M}$. Let $H_i = \innp{g^{p_i}}$. We claim that $\D_N(H_i,g) \geq p_i$ and since \cite[3.B2]{Gromov} implies that $\|H_i\|_S \approx p_i$, this will give us the lower bound. 
Thus, we need demonstrate if $\map{\pi}{N}{Q}$ is a surjective group morphism where $|Q| < p_i$ that $\pi(g) \in \pi(H_i)$. Now let $\la = \Ord_Q(\pi(g)) < p_i$, then we have $\gcd(p_i,\la) = 1$. Thus, there exists a $t \in \Z$ such that $t \: p_i \equiv 1 \mod \la \Z$. Hence, $\pi(g^{t \: p_i}) = \pi(g)$, and subsequently, $\pi(g) \in \pi(H)$.
\end{proof}

\section{Finite Extensions}
Using the methods of \cite[\S 7]{dere_pengitore_17}, this section demonstrates that we may generalize the methods to virtually nilpotent groups. 
\begin{thm}
	\label{virtually}
	Let $G$ be a finitely generated group with finite generating subset $S$, and assume that $G$ has a finite index normal subgroup $N$ with a finite generating subset $S^\prime$. For any separable subgroup $H \le G$ of infinite index, we have that there exists a $k \in \N$ such that $$\Farb_{N,N \cap H,S^\prime}(n) \preceq  \Farb_{G,H,S} (n) \preceq \left( \Farb_{N, N\cap H, S^\prime} (n) \right)^{([G:N])^2}.$$ Moreover, for the subgroup separability function, it holds that $$ \Sub_{N,S^\prime}(n) \preceq \Sub_{G,S} (n) \preceq \left( \Sub_{N, S^\prime} (n) \right)^{([G:N])^2}.$$
\end{thm}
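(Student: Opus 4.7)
The plan is to prove the lower bounds by restricting finite quotients from $G$ to $N$, and to establish the upper bounds by lifting finite quotients of $N$ to $G$ through the normal core construction. A repeatedly used ingredient is the standard comparison, analogous to Lemma \ref{subgroup_norm_lemma}, that there exists a constant $C_0 > 0$ with $\|x\|_S \leq C_0 \|x\|_{S'}$ and $\|x\|_{S'} \leq C_0 \|x\|_S$ for all $x \in N$, together with the analogous inequalities on subgroup norms for finitely generated $K \leq N$.

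For the lower bounds, I would fix $g \in N \setminus (N \cap H)$ with $\|g\|_{S'} \leq n$; then $g \in G \setminus H$ and $\|g\|_S \leq C_0 n$. Any surjection $\pi \colon G \to Q$ witnessing $\D_G(H,g)$ restricts to $\pi|_N \colon N \to \pi(N)$, a finite quotient of order at most $|Q|$ that separates $g$ from $N \cap H$. Maximizing over $g$ yields the lower bound for $\Farb$, and an identical argument applied to a subgroup $H \leq N$ with $\|H\|_{S'} \leq n$ (hence $\|H\|_S \leq C_0 n$) yields the lower bound for $\Sub$.

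For the upper bound on $\Farb$, set $k = [G:N]$ and fix a system $\{h_1, \ldots, h_r\} \subseteq H$ of $r \leq k$ representatives for the cosets of $N \cap H$ in $H$, whose $S$-lengths are bounded by an absolute constant $C_1$. Given $g \in G \setminus H$ with $\|g\|_S \leq n$, either $g \notin HN$, in which case the quotient $G \to G/N$ of order at most $k$ does the job, or $g = h_j m$ for some $j$ and some $m \in N \setminus (N \cap H)$ with $\|m\|_{S'} \leq C_2(n + C_1)$. By hypothesis there exists a surjection $\pi \colon N \to Q$ separating $m$ from $N \cap H$, with $|Q| \leq \Farb_{N, N \cap H, S'}(C_2(n + C_1))$. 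The key step is then to set $K^* = \bigcap_{x \in G} x \ker(\pi) x^{-1}$, which is normal in $G$ with $[G : K^*] \leq k \cdot |Q|^k$: since $N \trianglelefteq G$ normalizes $\ker \pi$, the $G$-orbit of $\ker \pi$ consists of at most $k$ subgroups, each of index $|Q|$ in $N$. A short calculation shows that the induced map $G \to G/K^*$ separates $g$ from $H$: an identity $g K^* = h K^*$ with $h \in H$ gives $m = (h_j^{-1} h) \cdot t$ for some $t \in K^* \leq N$, forcing $h_j^{-1} h \in N \cap H$ and hence $\pi(m) \in \pi(N \cap H)$, a contradiction.

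For $\Sub$, the same argument applies provided we can also estimate $\|N \cap H\|_{S'}$ in terms of $\|H\|_S$. My plan is to apply Schreier's lemma to the index-at-most-$k$ inclusion $N \cap H \leq H$ together with the norm comparison between $S$ and $S'$, which yields $\|N \cap H\|_{S'} = O(n^2)$. The resulting bound $k \cdot (\Sub_{N, S'}(O(n^2)))^k$ on $\D_G(H, g)$ is then absorbed into $(\Sub_{N, S'}(n))^{([G:N])^2}$ by the standard $\preceq$-arithmetic (using $n \leq \Sub_{N, S'}(n)$), the doubling of the exponent accounting for both the Schreier-induced squaring of the argument and the prefactor $k$. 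The principal obstacle is precisely this norm bookkeeping, but no genuinely new idea is required beyond the core construction itself.
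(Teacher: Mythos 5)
Your lower bounds are obtained exactly as in the paper: restrict a separating quotient of $G$ to $N$ and use the linear norm comparison for the finite-index subgroup. For the upper bounds, however, you take a genuinely different and more self-contained route. The paper reduces separating $g$ from $H$ to separating $g$ from at most $[G:N]$ right translates of $H\cap N$ inside $N$ and then invokes Lemmas 7.3--7.5 of \cite{dere_pengitore_17}; you instead use a single quotient $\pi\colon N\to Q$ separating $m=h_j^{-1}g$ from $N\cap H$ and pass to the $G$-normal core $K^*$ of $\ker\pi$. Your verification that $gK^*\neq hK^*$ for all $h\in H$ is correct (from $g=ht$ with $t\in K^*\le N$ one gets $h_j^{-1}h=mt^{-1}\in N\cap H$ and hence $\pi(m)\in\pi(N\cap H)$), and since $N$ normalizes $\ker\pi$ the core has index at most $[G:N]\cdot|Q|^{[G:N]}$ in $G$. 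This actually yields the sharper exponent $[G:N]$ in place of $[G:N]^2$, which of course still implies the stated bound.

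There is, however, a genuine problem in your treatment of the $\Sub$ upper bound. Schreier's lemma applied to $N\cap H\le H$ produces generators of the form $h_i t_j h_{i'}^{-1}$ of $S$-length at most $(2[G:N]+1)\|H\|_S$, so $\|N\cap H\|_{S'}=O(n)$ --- linear, not $O(n^2)$; this is precisely the computation carried out in the paper, with constant $C=2[G:N]+1$. Your weaker quadratic estimate is not just inefficient: the step where you absorb $\Sub_{N,S'}(O(n^2))$ into $\left(\Sub_{N,S'}(n)\right)^{([G:N])^2}$ ``by standard $\preceq$-arithmetic using $n\le \Sub_{N,S'}(n)$'' is invalid for a general non-decreasing function. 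For instance, if $\Sub_{N,S'}(n)$ grew like $2^n$, then $\Sub_{N,S'}(Cn^2)$ eventually dominates every fixed power of $\Sub_{N,S'}(Cn)$, and Theorem \ref{virtually} is stated for arbitrary finitely generated $G$ and $N$, where no polynomial bound on $\Sub_{N,S'}$ is assumed. The repair is straightforward: do the Schreier bookkeeping correctly so that both $\|N\cap H\|_{S'}$ and $\|m\|_{S'}$ are linear in $n$, after which the argument of $\Sub_{N,S'}$ is $O(n)$ and your core construction closes the upper bound without any absorption trick.
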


\begin{proof}
First take a fixed subgroup $H \le G$, and consider the subgroup $H_0 = H \cap N$. Take elements $h_i \in H$ such that $\cup_{i=1}^m H_0 \: h_i= H$. Separating an element $g \in G \setminus H$ is equivalent to separating $g$ from $k$ right translates of $H_0 \le N$ where $k \leq [G:N]$. A combination of \cite[Lemma 7.3.]{dere_pengitore_17}, \cite[Lemma 7.4.]{dere_pengitore_17} and \cite[Lemma 7.5.]{dere_pengitore_17} now gives the upper half of the first inequality. 

For the lower bound of the first inequality, we let $g \in N \setminus N \cap H$ such that $\|g\|_{S^\prime} \leq n$. We note that if $g \in N \setminus N \cap H$, then also $g \in G \setminus H$. There exists a constant $C> 0$ independent of $n$ such that $\|g\|_{S} \leq C n$, since $N$ is a finite index subgroup.  Hence, there exists a surjective group morphism $\map{\pi}{G}{Q}$ such that $\pi(g) \notin \pi(H)$ and $|Q| \leq \Farb_{G,H,S}(C \: n)$. Since $\pi(g) \notin \pi(H)$, it follows that $\pi(g) \notin \pi(N \cap H)$, and thus, $\D_N(N \cap H,g) \leq \Farb_{G,H,S}(C\: n)$. Subsequently, $$\Farb_{N,N \cap H,S^\prime}(n) \preceq \Farb_{G, H,S}(n).$$

For the upper bound of the second inequality, it suffices to show that there exists a constant $C > 0$ such that for every finitely generated subgroup $H \le G$, it holds that $\Vert H_0 \Vert_S \leq C \Vert H \Vert.$ Indeed, in this case we can use the same methods as before to find the conclusion. To see that such a constant $C$ exists, let $H$ be any finitely generated subgroup of $G$ and fix generators $t_i \in H$ with $\Vert t_i \Vert_S \leq \Vert H \Vert_S$. Take elements $h_i \in H$ such that $$H = \bigcup_{i=1}^m h_i \: H_0 = \bigcup_{i=1}^k H_0 \: h_i.$$ Given that the diameter of the group $\faktor{H}{H_0}$ is bounded above by $[G:N]$, we may assume that $\Vert h_i \Vert_S \leq [G:N] \Vert t_i \Vert_S$. Schreier's Lemma implies that a finite  generating subset for $H_0$ is given by the elements $h_i \: t_j \: h_{i^\prime}^{-1}$ with $h_j \in T$ which lie in $H$. We conclude that $\Vert H_0 \Vert \leq C \Vert H \Vert$ with $C = 2 [G:N] + 1$.

For the lower bound of the second inequality, let $H \leq N$ and $g \in N \setminus N \cap H$ such that $\|H\|_{S^\prime},\|g\|_{S^\prime} \leq n$. As before, we note that $\|H\|_{S},\|g\|_S \leq C \: n$ for some $C > 0$ independent of $n$. Hence, there exists a group morphism $\map{\pi}{G}{Q}$ such that $\pi(g) \notin \pi(H)$ and where $|Q| \leq \Sub_{G,S}(C \: n)$. That implies $\D_N(N \cap H,g) \leq \Sub_{G,S}(C \: n)$. Thus, $$\Sub_{N,S^\prime}(n) \preceq \Sub_{G,S}(n). \qedhere $$
\end{proof}

\begin{cor}
Let $\Gamma$ be an infinite, finitely generated group with finite generating subset $S$, and let $N$ be a $\mathcal{F}$-group that is isomorphic to a normal finite index subgroup of $\Gamma$.  For every infinite index, finitely generated subgroup $H \le \Gamma$, we have that $$\log(n) \preceq \Farb_{\Gamma,H,S} (n) \preceq \left( \log(n) \right)^{h(N) \: \pr{[\Gamma:N]}^2}.$$ Moreover, if $k$ is the natural number from Theorem \ref{effective_subgroup_separability_thm} for $N$, then $$n \preceq \Sub_{\Gamma,S} (n)\preceq n^{k \: \pr{[\Gamma:N]}^2}.$$ 
\end{cor}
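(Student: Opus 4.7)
The plan is to apply Theorem \ref{virtually} to $\Gamma$ together with its finite index normal subgroup $N$, and then substitute the precise upper and lower bounds for $\Farb_{N, N \cap H, S'}$ and $\Sub_{N, S'}$ that are supplied by Theorem \ref{effective_separation_subgroup_thm} and Theorem \ref{effective_subgroup_separability_thm}. The entire argument is essentially a compilation of these three results, so no substantive new ideas are required; the only genuine thing to check is an index bookkeeping statement about $N \cap H$.

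First I would verify that $N \cap H$ is infinite index in $N$ whenever $H$ is infinite index in $\Gamma$. Indeed, the identity $[\Gamma : N \cap H] = [\Gamma : N] \cdot [N : N \cap H]$ together with $N \cap H \leq H$ shows $[\Gamma : H] \leq [\Gamma : N \cap H]$, so a finite value of $[N : N \cap H]$ would force $[\Gamma : H] < \infty$, contrary to hypothesis. This is exactly the hypothesis needed to invoke Theorem \ref{effective_separation_subgroup_thm} for the pair $(N, N \cap H)$.

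For the upper bounds, Theorem \ref{virtually} gives
$$\Farb_{\Gamma, H, S}(n) \preceq \pr{\Farb_{N, N \cap H, S'}(n)}^{([\Gamma : N])^2} \quad \text{and} \quad \Sub_{\Gamma, S}(n) \preceq \pr{\Sub_{N, S'}(n)}^{([\Gamma : N])^2}.$$
Plugging in $\Farb_{N, N \cap H, S'}(n) \preceq (\log n)^{h(N)}$ from Theorem \ref{effective_separation_subgroup_thm} and $\Sub_{N, S'}(n) \preceq n^k$ from Theorem \ref{effective_subgroup_separability_thm} immediately produces the claimed upper bounds $(\log n)^{h(N) ([\Gamma:N])^2}$ and $n^{k ([\Gamma:N])^2}$, once one notes that raising the relation $\preceq$ to a fixed power respects the asymptotic equivalence.

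For the lower bounds, I would apply the other halves of Theorem \ref{virtually}, namely $\Farb_{N, N \cap H, S'}(n) \preceq \Farb_{\Gamma, H, S}(n)$ and $\Sub_{N, S'}(n) \preceq \Sub_{\Gamma, S}(n)$, and combine them with the lower bounds $\log(n) \preceq \Farb_{N, N \cap H, S'}(n)$ and $n \preceq \Sub_{N, S'}(n)$ coming from the same two theorems. The only point that merits a brief sentence is that $\Gamma$ being infinite combined with the infinite index of $N \cap H$ in $N$ ensures that the lower bound arguments apply. Since the real difficulty has been absorbed into Theorem \ref{virtually}, I do not expect any step here to present a serious obstacle.
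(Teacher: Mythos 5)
Your proposal is correct and follows exactly the route the paper takes: the paper's proof is a one-line citation of Theorem \ref{virtually} together with Theorems \ref{effective_separation_subgroup_thm} and \ref{effective_subgroup_separability_thm}, and your write-up simply makes explicit the (correct) bookkeeping that $N \cap H$ has infinite index in $N$, which the paper leaves implicit.
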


\begin{proof}
This follows immediately from Theorem \ref{virtually}, Theorem \ref{effective_separation_subgroup_thm} and Theorem \ref{effective_subgroup_separability_thm}.
\end{proof}

\section{Open questions}
\label{open}
In this section, we state some open questions about the exact form of the functions we introduced before. In Theorem \ref{effective_separation_subgroup_thm} and Theorem \ref{effective_subgroup_separability_thm} we gave the first lower and upper bounds for the functions $\Sub_{N,S}(n)$ and $\Farb_{N,H,S}(n)$, but the realized bounds are far from sharp. The techniques we develop do not allow to give sharp bounds, but we give some idea here how to improve the bounds.

\paragraph{Separability for subgroups}

One of our main results gives lower and upper bounds for $\Farb_{N,H,S}(n)$ and we believe that the exact bound lies inbetween.

\begin{ques}
Let $N$ be a $\mathcal{F}$-group with generating set $S$ and $H \le N$ a subgroup. Does there exist some $0 \leq k \leq h(N)$ such that $$\Farb_{N,H,S}(n) \approx \pr{\log(n)}^{k}?$$
\end{ques}

As an example, we show that it holds for the Heisenberg group  $H_3(\Z)$, which is given by the presentation $$H_3(\Z) = \langle a, b, c \mid [a,b] = c, [a,c] = [b,c] = 1 \rangle.$$ Take $S = \{a, b, c \}$ the standard generating set. Note that the center of this group $Z(H_3(\Z))$ is generated by $c$. If a subgroup $H \le H_3(\Z)$ has finite index, then the function $\Farb_{H_3(\Z),H,S}(n)$ is bounded. Thus, from now on we assume that $H$ has infinite index. We distinguish between two types of subgroups, depending on whether $H \cap Z(H_3(\Z))$ is trivial or not.

\begin{prop}
	Let $H \leq \text{H}_3(\Z)$ a subgroup of infinite index such that $H \cap Z(H_3(\Z)) \neq \set{1}$. Then $$\Farb_{ \text{H}_3(\Z),H,S}(n) \approx \log(n).$$
\end{prop}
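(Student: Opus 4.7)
The lower bound $\log(n) \preceq \Farb_{\text{H}_3(\Z), H, S}(n)$ is immediate from the first part of Theorem \ref{effective_separation_subgroup_thm}, since $H$ has infinite index in the $\mathcal{F}$-group $\text{H}_3(\Z)$. The content of the proposition is the matching upper bound, which I plan to prove by producing small separating quotients after first classifying the possible shapes of $H$.

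I would first describe $H$ precisely. Write $H \cap Z(\text{H}_3(\Z)) = \langle c^m \rangle$ with $m \geq 1$, and let $\pi \colon \text{H}_3(\Z) \twoheadrightarrow \text{H}_3(\Z)/\langle c \rangle \cong \Z^2$ denote the projection modulo the center. Since $H \cap \langle c \rangle$ has finite index in $\langle c \rangle$, the image $\bar H := \pi(H)$ cannot have finite index in $\Z^2$ without forcing $H$ itself to have finite index in $\text{H}_3(\Z)$; therefore $\bar H$ has rank at most one. A short verification then shows that either $H = \langle c^m \rangle$ or $H = \langle c^m, w \rangle$ for some $w = a^{p_0} b^{q_0} c^{k_0}$ with $(p_0, q_0) \in \Z^2$ primitive.

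Given $g = a^x b^y c^z \notin H$ with $\|g\|_S \leq n$, I would split according to whether $\pi(g) \in \bar H$. If $\pi(g) \notin \bar H$, I would construct a linear form $\ell \colon \Z^2 \to \Z$ vanishing on $\bar H$ with $\ell(\pi(g)) \neq 0$: concretely $\ell(x, y) = q_0 x - p_0 y$ when $\bar H$ has rank one, or a suitable coordinate projection when $\bar H = \{0\}$. Then $|\ell(\pi(g))| \leq C_H \cdot n$, so standard estimates on prime factorizations yield a prime $p \leq C_H' \log n$ not dividing $\ell(\pi(g))$, and the composite $\text{H}_3(\Z) \to \Z^2 \to \Z/p\Z$ separates $g$ from $H$ in a quotient of order at most $O(\log n)$.

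If instead $\pi(g) \in \bar H$, then writing $(x, y) = k(p_0, q_0)$ gives the factorization $g = w^k c^{z'}$ with $z' = z - k k_0 - \binom{k}{2} p_0 q_0$, and $g \notin H$ forces $m \nmid z'$. My plan is to separate using the natural projection onto the finite Heisenberg group $\text{H}_3(\Z/M\Z)$ for a suitable multiple $M$ of $m$ depending only on $H$ (for instance $M = m^2$ works uniformly). The key point, which I regard as the main obstacle, is the verification that the image of $H$ meets the image of $\langle c \rangle$ exactly in $\langle \bar c^m \rangle$ inside $\text{H}_3(\Z/M\Z)$, so that the image of $c^{z'}$ avoids the image of $H$ precisely when $m \nmid z'$. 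This reduces to the primitivity of $(p_0, q_0)$ together with a direct computation using the Heisenberg power formula $(a^x b^y c^z)^j = a^{jx} b^{jy} c^{jz + \binom{j}{2} xy}$; there is a mild subtlety for even $m$ arising from the $\binom{M}{2}$-contribution, which is why one must take $M$ strictly larger than $m$ in that case. Since $M$ depends only on $H$, this case contributes a bound independent of $n$, completing the proof that $\Farb_{\text{H}_3(\Z), H, S}(n) \preceq \log n$.
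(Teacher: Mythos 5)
Your overall strategy is the same as the paper's: split on whether the image of $g$ in $\faktor{H_3(\Z)}{\langle c \rangle} \cong \Z^2$ lies in the image of $H$, obtain a $\log n$ bound in the first case by abelian separation, and a bound depending only on $H$ in the second case (the paper invokes Proposition \ref{central_element_sep_subgroup} for the finitely many central cosets $c^l$ with $0 < l < k$, whereas you compute directly in $H_3(\Z/m^2\Z)$; both work). However, your classification of $H$ contains an error that propagates into a genuine gap in the first case. It is not true that $\pi(H) \leq \Z^2$ must be generated by a primitive vector: the subgroup $H = \langle a^2, c \rangle$ satisfies $H \cap Z(H_3(\Z)) = \langle c \rangle \neq \set{1}$ and has infinite index, yet $\pi(H) = \langle (2,0) \rangle$. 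For such $H$, the linear form $\ell(x,y) = q_0 x - p_0 y$ vanishes on the whole isolator $\sqrt[\Z^2]{\pi(H)}$, so it cannot detect elements such as $g = a$, for which $\pi(g) = (1,0) \notin \pi(H)$ but $\ell(\pi(g)) = 0$. As written, your argument for the case $\pi(g) \notin \pi(H)$ only covers the sub-case $\pi(g) \notin \sqrt[\Z^2]{\pi(H)}$.

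The missing sub-case is easy to repair: when $\pi(g) \in \sqrt[\Z^2]{\pi(H)} \setminus \pi(H)$, the quotient $\faktor{\Z^2}{\pi(H)} \cong \Z \oplus \Z/d\Z$ has torsion of order $d$ depending only on $H$, and $g$ is separated by composing $\pi$ with the projection onto that torsion factor, a quotient of order bounded independently of $n$. This is exactly what the paper gets for free by citing Corollary \ref{abeliancor} for the pair $\pi_{\text{ab}}(H) \leq \pi_{\text{ab}}(H_3(\Z))$ instead of hand-building the separating quotient. With that sub-case added, and with the classification restated so that $\pi(w)$ is a generator of $\pi(H)$ that need not be primitive (which requires adjusting the exponent bookkeeping in your second case, though the $H_3(\Z/M\Z)$ computation there survives), your argument goes through and yields the same $\log n$ upper bound.
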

\begin{proof}
	Let $H \leq \text{H}_3(\Z)$ be an infinite index subgroup and fix a generator $c^k$ with $k \neq 0$ for $H \cap Z(H_3(\Z))$. Theorem \ref{effective_separation_subgroup_thm} implies that $\log(n) \preceq \Farb_{ \text{H}_3(\Z),H,S}(n)$. Therefore, we need only to give a sharp upper bound.
	
	Let $x \in \text{H}_3(\Z) \backslash H$ such that $\|x\|_S \leq n$. In the case where $\pi_{\text{ab}}(x) \notin \pi_{\text{ab}}(H)$, we can write $B = \pi_{ \text{ab}}(H), \hspace{1mm} A = \pi_{ \text{ab}}(H_3(\Z))$ and use Corollary \ref{abeliancor} on the group $\faktor{A}{B}$.  
	
	In the other case, we have that $x = h \: c^l$ with $h \in H$ and $c^l \notin \langle c^k \rangle$. We can always assume that $0 < l < k$ and hence there are only finitely many possibilities. Proposition \ref{central_element_sep_subgroup} implies that there exists a $D \in \N$, independent of $l$, such that every $c^l$ with $0 < l < k$ can be separated from $H$ in a quotient of order $\leq D$. Just as in the proof of Theorem \ref{effective_separation_subgroup_thm}, we find that $x$ is also separated from $H$ in this quotient. Thus, in the second case, we get an upper bound independent of $\Vert x \Vert_S$.
\end{proof}
\begin{prop}
	Let $H \leq \text{H}_3(\Z)$ such that $H \cap Z(H_3(\Z)) = \set{1}$. Then $$\Farb_{\text{H}_3(\Z),H,S}(n) \approx \pr{\log(n)}^3.$$
\end{prop}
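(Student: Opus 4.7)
The upper bound $\Farb_{\text{H}_3(\Z),H,S}(n) \preceq (\log n)^3$ is immediate from Theorem \ref{effective_separation_subgroup_thm} and the fact that $h(\text{H}_3(\Z)) = 3$, so the real task is the matching lower bound. The key observation is that the hypothesis $H \cap Z(\text{H}_3(\Z)) = \set{1}$ lets me reduce subgroup separation to ordinary residual finiteness: if a finite quotient $\pi : \text{H}_3(\Z) \to Q$ separates $c^k$ from $H$, then $\pi(c^k) \neq 1$ (since $1 \in \pi(H)$), so its size is at least $\D_{\text{H}_3(\Z)}(\set{1}, c^k)$.

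The plan is therefore to test with the sequence $g_i = c^{k_i}$ where $k_i = \lcm(1, 2, \ldots, N_i)$. These elements lie in $\langle c \rangle \setminus H$ by hypothesis; the standard sub-Riemannian geometry of the Heisenberg group gives $\|c^{k_i}\|_S \approx \sqrt{k_i}$, and the Chebyshev estimate $\log \lcm(1,\ldots,N) \approx N$ yields $\log \|g_i\|_S \approx N_i$. For any finite quotient $\pi$ with $\pi(c^{k_i}) \neq 1$, the order $m$ of $\pi(c)$ in $Q$ does not divide $k_i$, and by the very definition of $k_i$ this forces $m \geq N_i + 1$.

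The main ingredient, and the only step I expect to require real work, is the lower bound $|Q| \geq m^3$ for every finite quotient of $\text{H}_3(\Z)$ in which $\pi(c)$ has order $m$. I plan to prove this directly from the structure of normal subgroups: set $K = \ker \pi$ and let $\Lambda = \pi_{\mathrm{ab}}(K) \subseteq \Z^2$ be the image under abelianization. The standard commutator identity $[g, k] = c^{\omega(\pi_{\mathrm{ab}}(g), \pi_{\mathrm{ab}}(k))}$ in $\text{H}_3(\Z)$, where $\omega((x_1,y_1),(x_2,y_2)) = x_1 y_2 - x_2 y_1$ is the determinant form on $\Z^2$, combined with normality of $K$ (so that $[g,k] \in K \cap \langle c \rangle = \langle c^m \rangle$), forces $\omega(\Z^2, \Lambda) \subseteq m \Z$. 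Since $\omega(\cdot, v)$ has image $\gcd(v_1,v_2)\cdot\Z$ for $v = (v_1,v_2)$, this is equivalent to $\Lambda \subseteq m \Z^2$, giving $[\Z^2 : \Lambda] \geq m^2$. Combined with $|[Q, Q]| = |\pi(\langle c \rangle)| = m$, we obtain $|Q| = m \cdot [\Z^2 : \Lambda] \geq m^3$.

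Putting these estimates together, $\D_{\text{H}_3(\Z)}(H, g_i) \geq m^3 \geq (N_i + 1)^3 \succeq (\log \|g_i\|_S)^3$ for every $i$. Because the sequence $\|g_i\|_S$ is dense enough on the logarithmic scale (the ratios $\|g_{i+1}\|_S/\|g_i\|_S$ grow at most polynomially in $N_i$) and $\Farb_{\text{H}_3(\Z),H,S}$ is non-decreasing, this yields $(\log n)^3 \preceq \Farb_{\text{H}_3(\Z),H,S}(n)$, matching the upper bound and completing the proof.
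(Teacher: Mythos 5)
Your proof is correct, and its overall skeleton matches the paper's: both reduce the problem to residual finiteness of central elements via the observation that $1 \in \pi(H)$ forces $\D_{H_3(\Z)}(\{1\},x) \leq \D_{H_3(\Z)}(H,x)$ for central $x \notin H$. The difference is in how the $\pr{\log n}^3$ lower bound for separating central elements is obtained: the paper simply cites the sequence of central elements constructed in \cite[Proposition 5.2]{Pengitore_1}, whereas you re-derive it from scratch. Your key lemma --- that any finite quotient $Q$ of $H_3(\Z)$ in which $\pi(c)$ has order $m$ satisfies $|Q| \geq m^3$, proved via the symplectic-form identity $[g,k]=c^{\omega(\pi_{\mathrm{ab}}(g),\pi_{\mathrm{ab}}(k))}$ forcing $\pi_{\mathrm{ab}}(\ker\pi) \subseteq m\Z^2$ --- is sound, as are the choice $g_i = c^{\lcm(1,\dots,N_i)}$, the quadratic distortion $\|c^k\|_S \approx \sqrt{k}$, the Chebyshev estimate, and the density check needed to pass from a sequence to the asymptotic statement. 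What your route buys is a self-contained argument that makes the mechanism behind the exponent $3$ (one factor of $m$ from the center, $m^2$ from the abelianization) completely explicit; what it costs is length, since the paper disposes of this step in one sentence by citation.
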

\begin{proof}
	The upper bound follows from Theorem \ref{effective_separation_subgroup_thm}, so it suffices to give the lower bound. For this we refer to the proof of \cite[Proposition 5.2.]{Pengitore_1}, which gives a sequence of central elements $x_i$ in $H_3(\Z)$ with $D_{H_3(\Z)}(\set{1},x_i) \approx \left( \log \left( \Vert x_i \Vert_S \right) \right)^3$ and $\Vert x_i \Vert_S \to \infty$. Since the elements are central, we have $x_i \notin H$, and moreover $$D_{H_3(\Z)}(\set{1},x_i) \leq D_{H_3(\Z))}(H,x_i),$$ so this gives us the lower bound.
\end{proof}

We conclude that for every subgroup $H \le H_3(\Z)$, we have $\Farb_{H_3(\Z),H,S}(n) \approx  \left( \log(n) \right)^k$ with $k \in \{0, 1, 3 \}$, and additionally, we have that there is an easy way to decide the correct value of $k$.

\paragraph{Subgroup separability}

For the subgroup function, we start by looking at some abelian examples.

\begin{Ex}
Consider the group $\Z^2$ with standard generating set $S = \left\{ e_1, e_2 \right\}$ and take any prime $p \in \Z$. Let $H$ be the subgroup generated by the elements $(1,p)$ and $(p,0)$. Note that the subgroup $H \cap \langle e_2 \rangle$ is generated by $(0,p^2)$ and hence has norm $p^2$. The original subgroup $H$ had norm $p+1$. By choosing increasing primes $p$, one thus finds an example for which $$\Vert H \cap \langle e_2 \rangle \Vert_S \approx \Vert H \Vert^2.$$ 

To separate the element $(0,p)$ from the subgroup $H$, we need a quotient of order at least $p^2$. This shows that $n^2 \preceq \Sub_{\Z^2,S}(n)$. To see that in fact $\Sub_{\Z^2,S}(n) \approx n^2$, we can use \cite[Proposition 4.2.]{dere_pengitore_17}. Similarly one can show that $\Sub_{\Z^k,S}(n) \approx n^k$, but we leave the details to the reader to check. In particular, for every $d \in \N$, there exists a finitely generated group $G$ such that $\Sub_{G,S}(n) \approx n^d$. 
\end{Ex}

It is an open problem to understand the asymptotic behaviour of the function $\Sub_{N,S}(n)$ for nilpotent groups $N$.  
\begin{ques}
	Let $N$ be a $\mathcal{F}$-group with generating set $S$. Is is true that $\Sub_{N,S}(n) \approx n^k$ for some $k \in \N$?
\end{ques}

We can do a similar construction in the Heisenberg group, showing that the degree of the subgroup fuctions can grow more rapidly than the Hirsch length.

\begin{Ex}
Let $H \le H_3(\Z)$ be the subgroup generated by the elements $b \: c^{p^2}$ and $b^p$. Just as in the previous example we get that $H \cap Z(H_3(\Z)) = \langle c^{p^3} \rangle $ and hence $\Vert H \cap Z(H_3(\Z)) \Vert_S \approx p^{\frac{3}{2}}$ whereas $\Vert H \Vert_S \approx p$. To separate the element $c^{p^2}$ with $\Vert c^{p^2} \Vert_S \approx p$ from this group, we need a quotient of order at least $p^9$. This implies that $n^9 \preceq \Sub_{H_3(\Z),S}(n)$.
\end{Ex}

If we would estimate the degree of the upper bound in Theorem \ref{effective_subgroup_separability_thm}, then it would only depend on $h(N)$, but it would grow exponentially with the Hirsch length. 

\bibliography{bib}

\begin{thebibliography}{10}

\bibitem{Agol_virtual_haken}
Ian Agol.
\newblock The virtual haken conjecture.
\newblock {\em Doc. Math.}, 18:1045--1087, 2013.

\bibitem{Bou_Rabee10}
Khalid Bou-Rabee.
\newblock Quantifying residual finiteness.
\newblock {\em J. Algebra}, 323(3):729--737, 2010.

\bibitem{dere_pengitore_17}
Jonas D\'{e}re and Mark Pengitore.
\newblock Effective twisted conjugacy separability of nilpotent groups.
\newblock {\em Math. Z.}, 2018.

\bibitem{Gromov}
M.~Gromov.
\newblock Asymptotic invariants of infinite groups.
\newblock In {\em Geometric group theory, {V}ol.\ 2 ({S}ussex, 1991)}, volume
  182 of {\em London Math. Soc. Lecture Note Ser.}, pages 1--295. Cambridge
  Univ. Press, Cambridge, 1993.

\bibitem{Patel3}
Mark~F. Hagen and Priyam Patel.
\newblock Quantifying separability in virtually special groups.
\newblock {\em Pacific J. Math.}, 284(1):103--120, 2016.

\bibitem{Hall_notes}
Philip Hall.
\newblock {\em The {E}dmonton notes on nilpotent groups}.
\newblock Queen Mary College Mathematics Notes. Mathematics Department, Queen
  Mary College, London, 1969.

\bibitem{Hall_coset}
Marshall Hall~Jr.
\newblock Coset representations in free groups.
\newblock {\em Trans. Amer. Math. Soc.}, 67:421--432, 1949.

\bibitem{Eick_Holt_obrien}
Derek~F. Holt, Bettina Eick, and Eamonn~A. O'Brien.
\newblock {\em Handbook of computational group theory.}
\newblock Discrete Mathematics and its Applications. Chapman \& Hall/CRC, Boca
  Raton, FL., 2005.

\bibitem{LLM}
Sean Lawton, Larsen Louder, and D.~B. McReynolds.
\newblock Decision problems, complexity, traces, and representations.
\newblock {\em Groups Geom. Dyn.}, 11(1):165--188, 2017.

\bibitem{zariski_closure_subgroup_separability}
Larsen Louder, D.B. McReynolds, and Priyam Patel.
\newblock Zariski closures and subgroup separability.
\newblock {\em Selecta Math.}, 23(3):2019--2027, 2017.

\bibitem{optimal_Bezout_coefficients_bound}
Bohdan~S. Majewski and George Havas.
\newblock {\em The complexity of greatest common divisor computations}, pages
  184--193.
\newblock Lecture Notes in Comput. Sci. Springer Berlin Heidelberg, 1994.

\bibitem{Mal01}
A.I. Mal'tsev.
\newblock On homomorphisms onto finite groups.
\newblock {\em Ivanov. Gos. Ped. Inst. Ucen. Zap.}

\bibitem{Patel}
Priyam Patel.
\newblock On a theorem of {P}eter {S}cott.
\newblock {\em Proc. Amer. Math. Soc.}, 142(8):2891--2906, 2014.

\bibitem{Pengitore_1}
Mark Pengitore.
\newblock Effective separability of finitely generated nilpotent groups.
\newblock {\em New York J. Math.}, 24:83--145, 2018.

\bibitem{corrigendum_pengitore}
Mark Pengitore.
\newblock Residual dimension of nilpotent groups: correction to ``effective
  separability of finitely generated nilpotent groups”.
\newblock {\em Preprint}, arXiv:1809.07934.

\bibitem{scott_surface_subgroups}
Peter Scott.
\newblock Subgroups of surfacce groups are almost geometric.
\newblock {\em J. London Math. Soc. (2)}, 17(3):555--565.

\bibitem{Segal_book_polycyclic}
Daniel Segal.
\newblock {\em Polycyclic groups.}, volume~82 of {\em Cambridge Tracts in
  Mathematics}.
\newblock Cambridge University Press, Cambridge, 1983.

\bibitem{Tenenbaum}
G\'{e}rald Tenenbaum.
\newblock {\em Introduction to analytic and probabilistic number theory.},
  volume~46 of {\em Cambridge Studies in Advanced Mathematics.}
\newblock Cambridge University Press, Cambridge, 1995.

\bibitem{Wise}
D.~Wise.
\newblock Research announcement: the structure of groups with a quasiconvex
  hierarchy.
\newblock {\em Electron. Res. Announc. Math. Sci.}, 16:44--55, 2009.

\end{thebibliography}
\bibliographystyle{plain}
\end{document}